\def\ps@pprintTitle{%
 \let\@oddhead\@empty
 \let\@evenhead\@empty
 \def\@oddfoot{}%
 \let\@evenfoot\@oddfoot}
\theoremstyle{plain}
\def\+{{\oplus}}
\newcommand{\p}{\mathbb{P}}
\newcommand{\pa}{p_i^{\uparrow}}
\newcommand{\pd}{p_i^{\downarrow}}
\newcommand{\new}[2]{{\color{Black}#2}}
\newcounter{comments}
\newtheorem*{theorem*}{Theorem}
\newtheorem{thm}{Theorem}[section]
\newtheorem{theorem}[thm]{Theorem}
\newtheorem{cor}[thm]{Corollary}
\newtheorem{defn}[thm]{Definition}
\newtheorem{example}[thm]{Example}
\newtheorem{remark}[thm]{Remark}
\numberwithin{equation}{section}
\begin{document}

\title{The Influence of Canalization on the Robustness of Boolean Networks}

\author[uzh,usz]{C. Kadelka\corref{cor1}\fnref{fn1}}
\ead{kadelka.claus@virology.uzh.ch}

\author[bsse]{J. Kuipers}
\ead{jack.kuipers@bsse.ethz.ch}

\author[uc,jac]{R. Laubenbacher\fnref{fn1}}
\ead{laubenbacher@uchc.edu}

\cortext[cor1]{Corresponding author}
\fntext[fn1]{Supported by NSF Grant CMMI-0908201 and US DoD Grant W911NF-14-1-0486}

\address[uzh]{Institute of Medical Virology, University of Zurich, 8006 Zurich, Switzerland}
\address[usz]{Division of Infectious Diseases and Hospital Epidemiology, University Hospital Zurich, 8091 Zurich, Switzerland}
\address[bsse]{D-BSSE, ETH Zurich, Mattenstrasse 26, 4058 Basel, Switzerland}
\address[uc]{Center for Quantitative Medicine, University of Connecticut Health Center, Farmington, CT 06030, USA}
\address[jac]{Jackson Laboratory for Genomic Medicine, Farmington, CT 06030, USA}

\begin{keyword}
$k$-canalizing function \sep Derrida value \sep Boolean network \sep nested canalizing function \sep stability
\end{keyword}

\begin{abstract}
Time- and state-discrete dynamical systems are frequently used to model molecular networks. This paper provides a collection of mathematical and computational tools for the study of robustness in Boolean network models. The focus is on networks governed by $k$-canalizing functions, a recently introduced class of Boolean functions that contains the well-studied class of nested canalizing functions. The activities and sensitivity of a function quantify the impact of input changes on the function output. This paper generalizes the latter concept to $c$-sensitivity and provides formulas for the activities and $c$-sensitivity of general $k$-canalizing functions as well as canalizing functions with more precisely defined structure. A popular measure for the robustness of a network, the Derrida value, can be expressed as a weighted sum of the $c$-sensitivities of the governing canalizing functions, and can also be calculated for a stochastic extension of Boolean networks. These findings provide a computationally efficient way to obtain Derrida values of Boolean networks, deterministic or stochastic, that does not involve simulation.
\end{abstract}

\maketitle

\section{Introduction}
\label{sec-intro}

The robustness of dynamic networks has long been an important topic of investigation in a wide range of contexts, using various definitions of the concept \cite{Strogatz, Boccaletti}. Due to the important role of stochasticity in the dynamic behavior of biological networks, in particular gene regulatory networks, the concept of robustness has been studied extensively in this context \cite{Kitano}. Since the introduction of Boolean and logical network models to the study of the properties of gene regulatory networks \cite{Kau4, Thomas73}, time- and state-discrete models, referred to subsequently as discrete dynamical systems, have become an increasingly popular representation of molecular networks \cite{Albert03,Li04,Davidich08}. For the most part, these consist of Boolean networks and various generalizations thereof. 
Questions regarding the robustness of molecular networks, modeled in the discrete dynamical systems framework, frequently involve the relationship between structural features of the network and its resulting dynamics. One commonly used measure of the robustness of a discrete dynamic network is the so-called Derrida value of the network, a measure of how perturbations propagate through the network~\cite{Derrida1}. 
This measure can then be related to network structure, such as the type of logical rules used to represent the regulatory mechanisms for individual network nodes; see, e.g., \cite{Balleza}. 

A frequently investigated concept related to robustness is that of \emph{canalization} in developmental biology, introduced by Waddington in the 1940s~\cite{Wad}. It was intended to account for the absence of a known mechanism that enables the genetic regulatory protocols driving embryonal development to accurately produce a specific phenotype, even in light of substantial variation in the developing organism's environment. The underlying idea is that phenotypes can be thought of as valleys in a landscape, and canalization is a ``force'' that channels the developmental trajectory accurately into a particular valley, protecting it from perturbations. 
Kauffman introduced a version of this concept to Boolean network modeling of gene regulatory networks by studying canalizing functions~\cite{Kau74}, as well as the special subclass of so-called nested canalizing functions~\cite{Kau2}.
Several authors recently extended this work by considering canalization as a property of Boolean functions~\cite{Layne12,He16}. Generalizing findings from \cite{Yua1}, they showed that every Boolean function has a unique algebraic form, characterized by three invariants: canalizing depth, dominance layer numbers and the non-canalizing core-polynomial. The canalizing depth of a function describes its degree of canalization and a $k$-canalizing function is defined to have canalizing depth of at least $k$. This extension neatly stratifies the set of all Boolean functions on $n$ variables by their canalizing depth and dominance layer numbers, including so-called nested canalizing functions ($n$-canalizing), canalizing functions ($1$-canalizing) and non-canalizing functions ($0$-canalizing).

A \emph{canalizing} function possesses at least one input variable such that, if this variable takes on a certain ``canalizing'' value, then the output value is already determined, regardless of the values of the remaining input variables. If this variable takes on another value, and there is a second variable with this same property, the function is $2$-canalizing. If $k$ variables follow this pattern, the function is \emph{$k$-canalizing}, and if all variables follow this pattern, the function is \emph{nested canalizing} (NCF). By definition, any $(k+1)$-canalizing function is also $k$-canalizing, and the \new{}{Boolean function $f(x_1,\ldots, x_n) = x_1 \wedge \cdots \wedge x_n$} is an example of an NCF.

The relationship between network stability, frequently measured using Derrida values, and the proportion of canalizing functions and their degree of canalization has received much attention in recent years. Boolean networks governed by canalizing functions are more stable than those constructed using random functions~\cite{Kau2,Kau3,Karlsson07}. In general, network stability and the degree of canalization are positively correlated, with networks governed by NCFs exhibiting the most stable dynamics~\cite{Mur,Kochi12,Jansen13,Dimitrova15}. These findings clearly motivate the study of $k$-canalizing functions in the context of understanding the regulatory logic of gene networks. 

In this paper, we introduce a closed formula for the efficient computation of the Derrida values of a network governed by $k$-canalizing functions. In detail, the paper is ordered as follows.
In Section \ref{2}, we formally introduce the computational concept of canalization, see also, e.g., \cite{Kau2,He16,Yua1}.
The activity of a variable in a function quantifies the influence of that variable on the whole function, while the average sensitivity of a function measures how sensitive a function is to a random input change and can be expressed as a weighted sum of the activities. Both quantities have been extensively studied by the Boolean modeling community, see, e.g., \cite{Layne12,Shmul04,Boppana97,Liu08}. In Section \ref{3}, we derive formulas for the expected activities of any $k$-canalizing function as well as any canalizing function with known dominance layer numbers. We also introduce the $c$-sensitivity of a function as a natural generalization of the sensitivity and show how to calculate the average $c$-sensitivity of a canalizing function from the activities of its variables. 
In Section \ref{4}, we use the normalized average $c$-sensitivities to derive a formula for the Derrida values of a Boolean network that is governed by $k$-canalizing functions. This greatly simplifies the application of this tool for robustness analyses of networks, which otherwise requires extensive simulations (difficult or infeasible for large networks). We explore the relationship between the Derrida value, the canalizing depth, the dominance layer numbers and the non-canalizing core-polynomial of a Boolean function.
In Section \ref{5}, we extend the formula for the Derrida values to the case of stochastic networks, which are an important modeling paradigm for gene regulatory networks.
Section \ref{6} concludes this paper with some remarks and possible avenues of future work.
 

\section{The Concept of Canalization}\label{2} 
In this section we review some well-known concepts and definitions, mainly from \cite{He16}, to introduce the computational concept of \emph{canalization}.
\begin{defn} \label{def2.1} 
A Boolean function $f(x_{1},\ldots, x_{n})$ is essential in the variable $x_{i}$ if there \new{}{exists} $r, s\in \{0,1\}$ and $(x_1, \ldots, x_{i-1}, x_{i+1}, \ldots, x_n) \in \{0,1\}^{n-1}$ such that
\[f(x_{1},\ldots,x_{i-1},r,x_{i+1},\ldots,x_{n})\neq f(x_{1},\ldots,x_{i-1},s,x_{i+1},\ldots,x_{n}).\]
\end{defn}

\begin{defn}\label{def2.2}
A Boolean function $f: \{0,1\}^n \rightarrow \{0,1\}$ is canalizing if there \new{}{exists} a variable $x_i$, a Boolean function $g(x_1,\ldots,x_{i-1},x_{i+1},\ldots,x_n)$ and $a,b \in \{0,1\}$ such that
$$f(x_1,\ldots,x_n) = 
 \begin{cases}
  b & \ \text{if} \  x_i = a \\
  g\not\equiv b & \ \text{if} \ x_i \neq a,
  \end{cases}
$$
in which case $x_i$ is called a canalizing variable, the input $a$ is the canalizing input, and the output value $b$ when $x_i=a$ is the corresponding canalized output.
\end{defn}

\begin{defn}\label{def2.3} 
A Boolean function $f(x_1,\ldots,x_n)$ is $k$-canalizing, where $0 \leq k \leq n$, with respect to the permutation $\sigma \in \mathcal{S}_n$, inputs $a_1,\ldots,a_k$ and outputs $b_1,\ldots,b_k$, if
\begin{equation}\label{eq:kcanalizing}f(x_{1},\ldots,x_{n})=
\left\{\begin{array}[c]{ll}
b_{1} & x_{\sigma(1)} = a_1,\\
b_{2} & x_{\sigma(1)} \neq a_1, x_{\sigma(2)} = a_2,\\
b_{3} & x_{\sigma(1)} \neq a_1, x_{\sigma(2)} \neq a_2, x_{\sigma(3)} = a_3,\\
\vdots  & \vdots\\
b_{k} & x_{\sigma(1)} \neq a_1,\ldots,x_{\sigma(k-1)}\neq a_{k-1}, x_{\sigma(k)} = a_k,\\
g\not\equiv b_k & x_{\sigma(1)} \neq a_1,\ldots,x_{\sigma(k-1)}\neq a_{k-1}, x_{\sigma(k)} \neq a_k,
\end{array}\right.\end{equation}
where $g = g(x_{\sigma(k+1)},\ldots,x_{\sigma(n)})$ is a Boolean function on $n-k$ variables. When $g$ is not canalizing, the integer $k$ is the canalizing depth of $f$ (as in \cite{Layne12}), and if $g$ is in addition not constant, it is called the core function of $f$, denoted by $f_C$. \new{}{If $g$ is constant, the variables $x_{\sigma(k+1)}, \ldots, x_{\sigma(n)}$ are not essential and $f$ does not have a core function.}
\end{defn}

\new{}{While the representation of a $k$-canalizing function as in Equation \ref{eq:kcanalizing} is generally not unique (certain canalizing variables may be reordered), its canalizing depth as well as its core function (if it exists) are independent of representation \cite{He16}.

\begin{example}
The Boolean function $f(w,x,y,z) = w \wedge \bar x \wedge (y \oplus z)$ has canalizing depth $2$ and core function $f_C = y \oplus z$.
\end{example}}

\begin{remark}\label{rem_ncf} 
If we consider the set of all Boolean functions on $n$ variables, then
\begin{enumerate}[\ (a)]
 \item The $n$-canalizing functions are precisely the well-studied nested canalizing functions.
 \item The $1$-canalizing functions are precisely the canalizing functions.
 \item Every Boolean function is $0$-canalizing.
 \item Every non-canalizing function has canalizing depth $0$. 
\new{}{ \item As in \cite{He16}, constant functions are - contrary to Kauffman's original definition \cite{Kau1} - not canalizing. This implies that (i) $g$ is uniquely determined and (ii) since $g \not\equiv b_k$, a $k$-canalizing function is essential in each of its canalizing variables.}
\end{enumerate}
\end{remark}


\section{Activities and Normalized Average Sensitivities}\label{3}
Some variables of a Boolean function have a greater influence over the output of the function than others. The activity of variable $x_i$ in the function $f(x_1,\ldots,x_n)$ is defined as 
$$\alpha^f_i = \frac 1{2^n} \sum_{\mathbf x \in \{0,1\}^n} \chi[f(\mathbf x) \neq f(\mathbf x \oplus e_i)],$$
where $\chi$ is an indicator function and $e_i$ is the $i$th unit vector. A change in a highly active variable frequently affects the function $f$, while a change in a variable with activity $0$ never alters $f$.

Another important quantity, directly related to the activity of a variable and introduced in \cite{Cook86}, measures how sensitive the output of a function is to input changes. The sensitivity of a function $f$ at a vector $\mathbf x$ is defined as the number of Hamming neighbors of $\mathbf x$ (vectors that differ from $\mathbf x$ in exactly one bit) with a different function value than $f(\mathbf x)$. That is,
$$S^f(\mathbf x) = \sum_{i=1}^n \chi[f(\mathbf x) \neq f(\mathbf x \oplus e_i)].$$
The average sensitivity $S^f$ is the expected value of $S^f(\mathbf x)$\new{under the distribution of $\mathbf x$. Under the uniform distribution,}{. Assuming a uniform distribution of $\mathbf x$,}
$$S^f = \mathbb{E}[S^f(\mathbf x)] = \frac 1{2^n} \sum_{\mathbf x \in \{0,1\}^n} \sum_{i=1}^n \chi[f(\mathbf x) \neq f(\mathbf x \oplus e_i)] = \sum_{i=1}^n \alpha^f_i.$$

In this section, we will generalize the concept of sensitivity to $c$-sensitivity and calculate the normalized average $c$-sensitivity for different classes of canalizing functions, which we will then use in the next section for the calculation of stability properties of Boolean networks. 

\begin{defn}\label{def_m_sens}
Any vector that differs in exactly $c$ bits from a given vector $\mathbf x$ is called a $c$-Hamming neighbor of $\mathbf x$. The $c$-sensitivity of $f(x_1,\ldots,x_n)$ at $\mathbf x$ is defined as the number of $c$-Hamming neighbors of $\mathbf x$ on which the function value is different from its value on $\mathbf x$. That is,
$$S^f_c(\mathbf x) = \sum_{\substack{I \subseteq \{1,\ldots,n\}\\ | I | = c}} \chi[f(\mathbf x) \neq f(\mathbf x \oplus e_I)],$$
where $e_I$ is a vector with $1$ at all indices in $I$ and $0$ everywhere else. \new{}{Note that, by definition, $S_0^f(\mathbf x) = 0$.} Assuming a uniform distribution of $\mathbf x$,
$$S^f_c = \mathbb{E}[S^f_c(\mathbf x)] = \frac 1{2^n}  \sum_{\mathbf x \in \{0,1\}^n} \sum_{\substack{I \subseteq \{1,\ldots,n\}\\ | I | = c}}\chi[f(\mathbf x) \neq f(\mathbf x \oplus e_I)]$$ 
is the average $c$-sensitivity of $f$. The range of $S^f_c$ is $[0,\binom nc]$. Let us therefore define the normalized average $c$-sensitivity of $f$ as 
$$s^f_c = \frac{S^f_c}{\binom nc} \in [0,1].$$
\end{defn}

This definition generalizes the concept of sensitivity in a natural way ($S^f_1 = S^f$) and allows the impact of a simultaneous change in more than one input of a function to be studied. We will now derive the expected activities of a $k$-canalizing function, where the expectation is taken over all $k$-canalizing functions and a uniform distribution is assumed.

\begin{theorem}\label{thm_q_k_canal}
Let $f_k$ be a $k$-canalizing function of $n$ variables. By relabeling the variables if necessary, assume that $f_k$ is $k$-canalizing in the variable order $x_1, x_2, \ldots, x_k$.
The expected activity of $x_j$ in $f_k$  is 
$$\mathbb{E}[\alpha^{f_k}_{j}] = \begin{cases} \frac 1{2^{j}} & \ \text{if} \ j<k\\ 
\frac 1{2^{k-1}} \frac{2^{2^{n-k}-1}}{2^{2^{n-k}}-1} & \ \text{if} \ j=k\\
\frac 1{2^{k}} \frac{2^{2^{n-k}-1}}{2^{2^{n-k}}-1} & \ \text{if} \ j>k.\end{cases}$$
\end{theorem}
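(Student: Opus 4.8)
The plan is to evaluate $\mathbb{E}[\alpha^{f_k}_{j}]$ directly from the definition of activity by exchanging the average over inputs with the average over $k$-canalizing functions. Writing $f_k$ in the form \eqref{eq:kcanalizing} with the variables in the given order, a random $k$-canalizing function is specified by the canalizing inputs $a_1,\ldots,a_k$, the canalized outputs $b_1,\ldots,b_k$, and the Boolean function $g=g(x_{k+1},\ldots,x_n)$, which by Definition \ref{def2.3} ranges over the $2^{2^{n-k}}-1$ Boolean functions on $n-k$ variables other than the constant $b_k$; I take all of these independent and uniform. Since the activity of $x_j$ is unchanged when some input $x_i$ is complemented throughout $f_k$, the value $\alpha^{f_k}_{j}$ does not depend on $a_1,\ldots,a_k$, so I may put $a_i=1$ for all $i$ and write
\[\mathbb{E}[\alpha^{f_k}_{j}]=\frac1{2^n}\sum_{\mathbf x\in\{0,1\}^n}\Pr\bigl[f_k(\mathbf x)\neq f_k(\mathbf x\oplus e_j)\bigr],\]
the probability now being over $b_1,\ldots,b_k$ and $g$ alone. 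I then split $\{0,1\}^n$ according to which row of \eqref{eq:kcanalizing} applies to $\mathbf x$: the ``branch $i$'' set $\{x_1\neq1,\ldots,x_{i-1}\neq1,x_i=1\}$ of size $2^{n-i}$ for $1\le i\le k$, on which $f_k=b_i$, and the ``core'' set $\{x_1=\cdots=x_k=0\}$ of size $2^{n-k}$, on which $f_k=g$.

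For $j<k$: flipping $x_j$ keeps $\mathbf x$ in its branch if that branch is one of $1,\ldots,j-1$, so the output does not change there; otherwise it moves $\mathbf x$ from branch $j$ to a branch $m>j$ or to the core, or from a branch $i>j$ or the core into branch $j$. In each such case the two outputs compared are $b_i$ and $b_{i'}$ for distinct canalizing indices, or $b_j$ and some value ($b_m$ with $m>j$, or a value of $g$) in which $b_j$ does not appear; conditioning on all parameters except the single fresh uniform $b$-bit shows the two outputs differ with probability exactly $\tfrac12$. Adding the branch sizes,
\[\Pr\bigl[f_k(\mathbf x)\neq f_k(\mathbf x\oplus e_j)\bigr]=\frac12\Bigl(\sum_{i=j}^{k}\frac1{2^{i}}+\frac1{2^{k}}\Bigr)=\frac1{2^{j}},\]
which is the first case.

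For $j=k$, flipping $x_k$ matters only when $\mathbf x$ lies in branch $k$ or in the core (together $2^{n-k}+2^{n-k}$ inputs), and it then swaps $\mathbf x$ between branch $k$ and the core, so the compared outputs are $b_k$ and $g$ at one and the same point of $\{0,1\}^{n-k}$. A short count shows that, for either value of $b_k$, exactly $2^{2^{n-k}-1}$ of the $2^{2^{n-k}}-1$ allowed functions $g$ take the value $1-b_k$ at a prescribed point (the constant $b_k$ is not one of them), so these outputs differ with probability $P:=2^{2^{n-k}-1}/(2^{2^{n-k}}-1)$, whence $\mathbb{E}[\alpha^{f_k}_{k}]=\tfrac1{2^{k-1}}P$. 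For $j>k$, flipping $x_j$ never changes which branch $\mathbf x$ lies in, and the output can change only on the core ($2^{n-k}$ inputs), the comparison being $g$ at two neighboring points of $\{0,1\}^{n-k}$; a similar count gives exactly $2^{2^{n-k}-1}$ of the allowed $g$ separating two prescribed points (again none of them constant), so the outputs differ with probability $P$ and $\mathbb{E}[\alpha^{f_k}_{j}]=\tfrac1{2^{k}}P$.

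The $j<k$ computation is routine once the branch partition is in place — in each case the two compared outputs are distinct independent uniform bits, giving the factor $\tfrac12$ and the clean value $1/2^{j}$. The one place I expect to need genuine care is $j\ge k$: there one must check that the restricted uniform law on $g$ — all Boolean functions on $n-k$ variables except the single constant $b_k$ — assigns probability exactly $P$ both to the event $g(p)\neq b_k$ at one point and to the event $g(p)\neq g(p')$ at two neighboring points. Keeping track of that exclusion correctly, since it shrinks the denominator of $P$ from $2^{2^{n-k}}$ to $2^{2^{n-k}}-1$ without touching the numerator $2^{2^{n-k}-1}$, is exactly what turns $\tfrac12$ into $2^{2^{n-k}-1}/(2^{2^{n-k}}-1)$.
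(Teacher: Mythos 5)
Your proof is correct and takes essentially the same route as the paper's: you decompose according to which canalizing variable fires (the paper conditions on $x_1\neq a_1,\ldots,x_{j-1}\neq a_{j-1}$ and writes the resulting subfunction as $(x_j+a_j)\bar g+b_j$, which amounts to your branch partition), obtain the factor $\tfrac12$ for $j<k$ (you via the fresh uniform bit $b_j$, the paper via the exclusion of both constants for $\bar g$), and for $j\ge k$ carry out the same count that exactly $2^{2^{n-k}-1}$ of the $2^{2^{n-k}}-1$ admissible core functions produce a change, the excluded constant $g\equiv b_k$ never doing so. The geometric-series bookkeeping and the final values agree with the paper's argument throughout.
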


\begin{proof}
See Appendix.
\end{proof}

The expected average $c$-sensitivity of any $k$-canalizing function is a weighted sum of the activities of its variables.
\begin{theorem}\label{thm_av_m_sens}
By relabeling the variables if needed, assume that $f_k(x_1,\ldots,x_n)$ is a $k$-canalizing function in the variable order $x_1, x_2, \ldots, x_k$. The average $c$-sensitivity of $f$ is
\[S^{f_k}_c = \sum_{j=1}^{n-c+1}\binom{n-j}{c-1} \alpha^{f_k}_{j}.\]
\end{theorem}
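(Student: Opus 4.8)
The plan is to compute the expected value $\mathbb{E}[S^f_c(\mathbf x)]$ by conditioning on which of the canalizing variables is the "first" one to take its canalizing value, since this is precisely the combinatorial structure that a $k$-canalizing function imposes on $\{0,1\}^n$. The key observation is the following pointwise identity. Fix a $k$-canalizing function $f = f_k$ with canalizing variables $x_1,\ldots,x_k$, canalizing inputs $a_1,\ldots,a_k$ and canalized outputs $b_1,\ldots,b_k$, and core $g$ on $x_{k+1},\ldots,x_n$. For a vector $\mathbf x$, define $m(\mathbf x)$ to be the smallest index $j\le k$ with $x_j = a_j$, or $m(\mathbf x) = k+1$ (a symbol for "the core region") if no such $j$ exists. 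Then $S^f_c(\mathbf x)$ depends on $\mathbf x$ only through $m(\mathbf x)$ together with, in the core region, the behaviour of $g$; and crucially, summing $\chi[f(\mathbf x)\ne f(\mathbf x\oplus e_I)]$ over all size-$c$ sets $I$ reduces to counting which single variable flip among $x_1,\ldots,x_k$ (if any) changes the output, because flipping any variable indexed $>m(\mathbf x)$ (when $m(\mathbf x)\le k$) leaves $f$ unchanged.

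\medskip
First I would establish the counting step. Suppose $m(\mathbf x) = j$ for some $j\le k$, so $x_1\ne a_1,\ldots,x_{j-1}\ne a_{j-1}$, $x_j = a_j$, and $f(\mathbf x) = b_j$. A set $I$ with $|I| = c$ can change the function value only if flipping the coordinates in $I$ moves $\mathbf x$ out of the region "first hit at stage $j$" in a way that lands on a different output. Flipping $x_j$ (so that $x_j\ne a_j$ now) sends us to stage $j+1$ or beyond; flipping any subset of $\{x_1,\ldots,x_{j-1}\}$ can send us to an earlier stage; flipping coordinates $>j$ does nothing as long as $x_j$ itself is not flipped. Working out exactly which of the $\binom{n-1}{c-1}$ sets $I$ containing $j$, and which of the sets $I$ not containing $j$ but meeting $\{1,\ldots,j-1\}$, produce an output change — taking into account that consecutive outputs $b_i, b_{i+1}$ may or may not be equal, and that $g$ in the core can take either value — is the technical heart. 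The cleanest route is to instead compute $\mathbb{E}[S^f_c(\mathbf x)]$ directly by linearity over $I$: for each $I$ with $|I|=c$, $\frac{1}{2^n}\sum_{\mathbf x}\chi[f(\mathbf x)\ne f(\mathbf x\oplus e_I)]$ is, by the definition of $c$-sensitivity summed the other way, related to the activities. Specifically one shows
\[
S^f_c = \frac{1}{2^n}\sum_{\substack{I\subseteq\{1,\ldots,n\}\\|I|=c}}\sum_{\mathbf x}\chi[f(\mathbf x)\ne f(\mathbf x\oplus e_I)]
\]
and the inner double sum for a fixed $I$ can be decomposed by the smallest element of $I$. This is where the binomial coefficient $\binom{n-j}{c-1}$ enters: $\binom{n-j}{c-1}$ counts the size-$c$ subsets $I$ of $\{1,\ldots,n\}$ whose minimum element is exactly $j$.

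\medskip
Concretely, I would argue that flipping a set $I$ with $\min I = j$ changes $f(\mathbf x)$ if and only if, after the flip, the "first canalizing hit" changes in a way that alters the output; and that averaged over $\mathbf x$, the probability of this event equals the activity $\alpha^{f_k}_j$ of the $j$-th variable — because the event is governed entirely by what happens at coordinate $j$ once the earlier coordinates are in generic position, and the contributions from $I$ with a larger minimum but overlapping support telescope. Summing over $j$ from $1$ to the largest value for which $\binom{n-j}{c-1}\ne 0$, i.e. $j \le n-c+1$, and weighting by the number $\binom{n-j}{c-1}$ of valid index sets $I$ with that minimum, yields
\[
S^{f_k}_c = \sum_{j=1}^{n-c+1}\binom{n-j}{c-1}\,\alpha^{f_k}_j,
\]
which is the claim. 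I expect the main obstacle to be justifying rigorously that, for a set $I$ with $\min I = j$, the probability over uniform $\mathbf x$ that $f(\mathbf x)\ne f(\mathbf x\oplus e_I)$ equals $\alpha^{f_k}_j$ exactly — independent of the other elements of $I$. This requires using the nested structure of Definition \ref{def2.3}: conditioning on the values of $x_1,\ldots,x_{j-1}$, either some $x_i = a_i$ with $i<j$ (in which case both $\mathbf x$ and $\mathbf x\oplus e_I$ lie "above" the determining coordinate $i$, so flipping coordinates $\ge j$ never matters and the contribution vanishes, matching that flips not at the active coordinate don't count toward $\alpha_i$), or $x_i\ne a_i$ for all $i<j$ (the "generic" case), in which case the flip at coordinate $j$ is decisive exactly as in the single-coordinate activity computation, and the remaining flipped coordinates $>j$ are absorbed by the fact that $\alpha^{f_k}_j$ already averages over all of them. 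Carefully bookkeeping this case split — and checking the boundary case $j = k$ versus $j > k$ and the interaction with the core $g$ — is the one place where a genuine argument, rather than routine algebra, is needed; the rest is the combinatorial identity $\sum_{I:\,\min I = j,\,|I|=c} 1 = \binom{n-j}{c-1}$.
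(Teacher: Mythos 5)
Your combinatorial skeleton is the same as the paper's: group the $c$-subsets $I$ by their minimum element, use that there are $\binom{n-j}{c-1}$ such subsets with $\min I=j$, and reduce everything to the claim that $\mathbb{P}\big(f_k(\mathbf x)\neq f_k(\mathbf x\oplus e_I)\big)$ depends on $I$ only through $\min I$ and equals $\alpha^{f_k}_{\min I}$. The gap is that you assert this claim holds \emph{exactly} for every $k$-canalizing function, ``independent of the other elements of $I$.'' That is correct when $\min I=j\le k$ --- your conditioning argument works: if some $x_i=a_i$ with $i<j$, both vectors produce the same output; otherwise exactly one of $\mathbf x,\mathbf x\oplus e_I$ has $x_j=a_j$ and outputs the constant $b_j$, while the other is evaluated at remaining coordinates that are still uniformly distributed, so the extra flipped coordinates are indeed absorbed. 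But it is false when $I$ lies entirely among the non-canalizing variables ($\min I>k$): there $\mathbb{P}\big(f(\mathbf x)\neq f(\mathbf x\oplus e_I)\big)=2^{-k}\,\mathbb{P}\big(g(\mathbf u)\neq g(\mathbf u\oplus e_{I})\big)$ for the \emph{specific} core $g$, and a simultaneous flip of several core inputs need not behave like a single flip. Concretely, $f(x_1,x_2,x_3)=x_1\wedge(x_2\oplus x_3)$ has canalizing depth $k=1$, core $g=x_2\oplus x_3$, and $\alpha_1=\alpha_2=\alpha_3=\tfrac12$; flipping $I=\{2,3\}$ never changes $f$, so $S^f_2=\tfrac12+\tfrac12+0=1$, whereas the claimed formula gives $\binom{2}{1}\alpha_1+\binom{1}{1}\alpha_2=\tfrac32$.

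Consequently the statement cannot be established pointwise; it is an expected-value statement (the paper introduces it as the \emph{expected} average $c$-sensitivity), and the paper's proof closes exactly the hole you flagged by invoking Theorem \ref{thm_q_k_canal}: for a uniformly random admissible core $g\not\equiv b_k$, the probability that $g$ takes different values on any two fixed distinct inputs is $\frac{2^{2^{n-k}-1}}{2^{2^{n-k}}-1}$, independent of how many bits those inputs differ in, so after averaging over the class of $k$-canalizing functions the change probability for any $I$ with $\min I>k$ equals the expected activity of a non-canalizing variable. To repair your argument, take expectations over $k$-canalizing functions throughout (or restrict to $k=n$, where every $I$ has $\min I\le k$ and your pointwise reasoning suffices) and replace the ``exact'' claim in the case $\min I>k$ by this pair-counting argument over random cores; the rest of your bookkeeping, including the case split at $j\le k$ and the identity for the number of subsets with a given minimum, is fine.
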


\begin{proof}
See Appendix.
\end{proof}

\begin{cor}\label{thm_q_ncf}
The expected activities of the variables $(x_{\sigma(1)}, x_{\sigma(2)}, \ldots, x_{\sigma(n)})$ of an NCF $f$ are
$$\mathbb{E}[\alpha^f] = \left(\frac 12, \frac 14, \ldots, \frac 1{2^{n-2}}, \frac 1{2^{n-1}},\frac 1{2^{n-1}}\right),$$
and the expected normalized average $c$-sensitivity \new{}{
$$\mathbb{E}[s^f_c] = \frac{c}{2n}{}_2F_1\Big[1,c-n;1-n;\frac 12\Big]$$
can be expressed in terms of a hypergeometric function. In particular for $c=1$, $\mathbb{E}[s^f_1] = \frac 1n$.}
\end{cor}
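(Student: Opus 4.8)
The plan is to specialize Theorems~\ref{thm_q_k_canal} and~\ref{thm_av_m_sens} to the case $k=n$ and then recognize the resulting finite sum as a terminating Gauss hypergeometric series. First I would note that an NCF on $n$ variables is by definition an $n$-canalizing function, so Theorem~\ref{thm_q_k_canal} with $k=n$ applies verbatim: for $j<n$ it gives $\mathbb{E}[\alpha^f_j]=2^{-j}$, and for $j=n=k$ the middle case applies with $n-k=0$, so that $2^{n-k}=1$ and the correction factor is $\frac{2^{2^{0}-1}}{2^{2^{0}}-1}=\frac{1}{1}=1$, yielding $\mathbb{E}[\alpha^f_n]=2^{-(n-1)}$. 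Together this is the claimed activity vector $\bigl(\tfrac12,\tfrac14,\ldots,\tfrac1{2^{n-1}},\tfrac1{2^{n-1}}\bigr)$.

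Next I would apply Theorem~\ref{thm_av_m_sens} with $k=n$ and use linearity of expectation to write
\[
 \mathbb{E}[S^f_c]=\sum_{j=1}^{n-c+1}\binom{n-j}{c-1}\,\mathbb{E}[\alpha^f_j].
\]
For $c\ge 2$ the summation index never reaches $n$, so every term uses $\mathbb{E}[\alpha^f_j]=2^{-j}$; after the substitution $m=j-1$ this becomes $\mathbb{E}[S^f_c]=\tfrac12\sum_{m=0}^{n-c}\binom{n-1-m}{c-1}2^{-m}$. I would then divide by $\binom nc$ and convert the binomial coefficients to Pochhammer symbols using $(c-n)_m=(-1)^m\tfrac{(n-c)!}{(n-c-m)!}$ and $(1-n)_m=(-1)^m\tfrac{(n-1)!}{(n-1-m)!}$; a short computation gives the term-by-term identity $\binom{n-1-m}{c-1}\big/\binom nc=\tfrac cn\cdot(c-n)_m/(1-n)_m$. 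Since $(1)_m=m!$, the series $\sum_{m\ge0}\tfrac{(c-n)_m}{(1-n)_m}(1/2)^m$ is exactly ${}_2F_1[1,c-n;1-n;\tfrac12]$, and it is finite — the series truncates at $m=n-c$ because $c-n$ is a non-positive integer, which matches our range of summation. Hence $\mathbb{E}[s^f_c]=\mathbb{E}[S^f_c]/\binom nc=\tfrac{c}{2n}\,{}_2F_1[1,c-n;1-n;\tfrac12]$ for $c\ge 2$.

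Finally I would treat $c=1$ separately, since there the sum in Theorem~\ref{thm_av_m_sens} does run up to $j=n$ and therefore picks up the exceptional activity $\mathbb{E}[\alpha^f_n]=2^{-(n-1)}$ rather than $2^{-n}$: one gets $\mathbb{E}[S^f_1]=\sum_{j=1}^{n-1}2^{-j}+2^{-(n-1)}=\bigl(1-2^{-(n-1)}\bigr)+2^{-(n-1)}=1$, hence $\mathbb{E}[s^f_1]=1/\binom n1=1/n$. This agrees with the general formula, because when the upper and lower parameters coincide ${}_2F_1[1,1-n;1-n;z]=(1-z)^{-1}$, so $\tfrac1{2n}\,{}_2F_1[1,1-n;1-n;\tfrac12]=\tfrac1{2n}\cdot2=\tfrac1n$.

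The only genuine subtlety — and the place I would be most careful — is exactly this $c=1$ endpoint: it is the exceptional value $\mathbb{E}[\alpha^f_n]=2^{-(n-1)}$ (not the ``generic'' $2^{-n}$) that makes $\mathbb{E}[s^f_1]$ collapse to the clean $1/n$; plugging in $2^{-n}$ would instead give the wrong $(1-2^{-n})/n$. Everything else — specializing the two prior theorems and matching a finite sum against the defining series of ${}_2F_1$ — is routine bookkeeping.
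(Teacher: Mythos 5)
Your proposal is correct and follows exactly the route the paper intends: the corollary is stated without a separate proof precisely because it is the specialization $k=n$ of Theorems~\ref{thm_q_k_canal} and~\ref{thm_av_m_sens}, followed by the routine rewriting of the resulting finite sum as a terminating ${}_2F_1$. Your separate treatment of $c=1$, where the exceptional activity $2^{-(n-1)}$ of the last variable makes $\mathbb{E}[s^f_1]$ collapse to $1/n$ (consistent with the cancellation convention ${}_2F_1[1,1-n;1-n;\tfrac12]=2$), is exactly the right care at the only delicate point.
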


Theorem 4.6 of \cite{He16} shows that any Boolean function can be written in a unique standard monomial form, in which the variables are grouped into different layers based on their dominance (see also \cite{Abd2,Yua1} for earlier work on this topic for NCFs). Any canalizing variable is part of the first layer. All variables that become canalizing once the variables from the first layer are excluded, are part of the second layer, etc. The number of layers is called the \emph{layer number}, denoted by $r$. The number of variables in the $i$th layer is the \emph{dominance number of layer $i$}, denoted by $k_i$, and the number of all variables that become eventually canalizing is the \emph{canalizing depth} $k=\sum k_i$. All remaining variables that never become canalizing are part of the \emph{core polynomial}, which is simply an affine transformation of the core function \new{}{(see \cite{He16} for details).}

Theorem \ref{thm_q_k_canal} yields the expected activities of a function, for which only its minimal canalizing depth is known. If the exact canalizing depth of a function and its dominance layer numbers are known, we can quantify the dynamical properties much more accurately. In \cite{Yua1}, the authors have already computed the activities and average sensitivity of an NCF with known dominance layer numbers. We will now determine the activities of a Boolean function, for which the exact canalizing depth, the dominance layer numbers and the Hamming weight of its core function are known. In this case, we do not require an expected value since all such functions share the same activities.

\begin{theorem}\label{thm_derrida3}
Let $f(x_1,\ldots,x_n)$ be any Boolean function with canalizing depth $k\geq 0$, $r$ layers of canalization with layer structure $k_1,\ldots,k_r$, where $k_i \geq 1 , \sum_{i=1}^r k_i = k$, and $v\in \{1,\ldots,2^{n-k}\}$ entries $\neq b_k$ in the truth table of its non-canalizing core $g$. By relabeling the variables if necessary, assume that $f$ is $k$-canalizing in the variable order $x_1, x_2, \ldots, x_k$. The activity of $x_j$ on $f$ is 
\begin{align*}
\alpha^f_j &= \begin{cases}
\varphi_{L(j)} + \frac{1}{2^{k-1}} \psi_{L(j)} & \ \text{if} \ j\leq k\\ 
 \frac{v(2^{n-k}-v)}{2^{n-1}(2^{n-k}-1)} & \ \text{if} \ j>k,\\ 
 \end{cases}
\end{align*}
where $L(j) \in \{1,\ldots,r\}$ denotes the dominance layer number of variable $x_j$ and
\begin{align*}
&\varphi_{r+1} = \varphi_r = 0, \qquad \varphi_i  = \varphi_{i+2} + \sum_{s=0}^{k_{i+1}-1} {\left(\frac 12\right)}^{k_1+\cdots+k_i+s} \ \text{for} \ i \geq 1,\\
&\psi_r = \frac{v}{2^{n-k}}, \qquad\quad\ \  \psi_i = 1-\psi_{i+1} \ \text{for} \ i \geq 1.
\end{align*} 
\end{theorem}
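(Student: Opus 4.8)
The plan is to compute the activity $\alpha^f_j$ directly from its definition by conditioning on the values of the canalizing variables $x_1,\ldots,x_k$, exploiting the nested structure of Equation \ref{eq:kcanalizing}. First I would handle the case $j > k$, which is essentially the activity of a variable in the core function $g$ (up to the factor $2^{k}$ coming from the probability that all canalizing variables ``miss'' their canalizing values, i.e.\ that $x_i \neq a_i$ for $i \le k$). Conditioned on being in the non-canalizing branch, flipping $x_j$ changes the output precisely when $x_j$ lies in one of the $v$ truth-table positions of $g$ with value $\neq b_k$ and its flip lands in one of the $2^{n-k}-v$ positions with value $b_k$, or vice versa; averaging over the $2^{n-k}$ inputs to $g$ this contributes $\frac{2 v (2^{n-k}-v)}{(2^{n-k})^2}$, and multiplying by the probability $2^{-k}$ of the non-canalizing branch gives $\frac{v(2^{n-k}-v)}{2^{n-1}(2^{n-k}-1)}$ once one writes $2^{2(n-k)} = 2^{n-1}\cdot 2^{n-k+1}$... wait, more carefully: $2^{-k} \cdot \frac{2v(2^{n-k}-v)}{2^{2(n-k)}} = \frac{v(2^{n-k}-v)}{2^{k-1}2^{2(n-k)}} = \frac{v(2^{n-k}-v)}{2^{2n-k-1}}$, and this must equal $\frac{v(2^{n-k}-v)}{2^{n-1}(2^{n-k}-1)}$ — the discrepancy is the standard subtlety that one is averaging over core functions / or the denominator reflects a conditional normalization; I would reconcile this by noting the core $g$ is \emph{fixed} here, so the average is over $\mathbf x$, and the stated denominator $2^{n-1}(2^{n-k}-1)$ indicates the expression is obtained after the core is assumed to have exactly $v$ off-values and one uses that the number of relevant input pairs is $v(2^{n-k}-v)$ out of $\binom{2^{n-k}}{?}$; I will work this bookkeeping out so the two forms agree.

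For the case $j \le k$, the key is to split the set of inputs $\mathbf x \in \{0,1\}^n$ according to which canalizing condition first triggers. Flipping variable $x_j$ can affect the output in two distinct ways: (i) the flip changes whether an \emph{earlier} canalizing variable's condition is active — but since $j \le k$ we are flipping $x_j$ itself, so the relevant effect is that flipping $x_j$ toggles between ``$x_j = a_j$'' (output $b_j$) and ``$x_j \neq a_j$'' (output determined by the later rules), \emph{provided} all of $x_1,\ldots,x_{j-1}$ miss their canalizing values; and (ii) when $x_j$ is already in its non-canalizing state on both sides, flipping it is irrelevant to the first $j$ conditions, and it can only matter through the core (handled as in the $j>k$ case, but this contributes to variables with $j \le k$ only in a measure-zero-in-the-relevant-conditioning way, actually not at all since $x_j$ with $j \le k$ does not appear in $g$). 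So the entire contribution for $j \le k$ comes from mechanism (i): with probability $2^{-(j-1)}$ all earlier canalizing variables miss, and then on one side the output is $b_j$ while on the other side it is the output of the ``residual'' function $f^{(j)} := f|_{x_1 \neq a_1,\ldots,x_j \neq a_j}$, which is itself $(k-j)$-canalizing with core $g$. The output differs iff that residual output is $\neq b_j$; its probability, averaged over the remaining $n-j$ free variables, is what the recursively-defined $\varphi$ and $\psi$ encode — $\psi_i$ tracking the alternation of the canalized outputs $b_i$ (the identity $\psi_i = 1 - \psi_{i+1}$ reflects that consecutive layers may or may not share a canalized value, and across a layer the value is constant), and $\varphi_i$ accumulating the geometric contributions $(1/2)^{k_1+\cdots+k_i+s}$ from the positions within layer $i+1$.

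The cleanest route is induction on the layer number $r$ (or on $k$): strip off the first layer, observe that $f$ restricted to all first-layer variables missing their canalizing inputs is again a function of the same type with one fewer layer, canalizing depth $k - k_1$, and the same core $g$ and same $v$; then relate $\alpha^f_j$ for $j$ in a later layer to $\alpha^{f'}_j$ in the reduced function, picking up a factor $(1/2)^{k_1}$ and an additive correction from the layer just removed. The recursions for $\varphi$ and $\psi$ should fall out of this reduction, with the two-step index shift ($\varphi_i$ in terms of $\varphi_{i+2}$) arising because the sign/value alternation of the $b_i$ pairs up layers. The base cases $\varphi_{r+1} = \varphi_r = 0$ and $\psi_r = v/2^{n-k}$ encode that in the innermost layer there is no further canalizing structure, only the core contributing its Hamming weight fraction. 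I expect the main obstacle to be the careful combinatorial bookkeeping in mechanism (i) — precisely identifying, for a variable $x_j$ in layer $L(j)$, how the within-layer position $s$ and the cross-layer alternation combine — and verifying that the closed forms of $\varphi_{L(j)}$ and $\psi_{L(j)}$ are exactly the solutions of the stated recursions; once the recursion is set up correctly this is routine, but getting the indices and the factor of $2^{k-1}$ versus $2^k$ right (the former for $j \le k$, the latter implicit in the $j > k$ formula) requires care, and cross-checking against Corollary \ref{thm_q_ncf} (the NCF case $k = n$, $r = n$, all $k_i = 1$, $v = 1$) and against Theorem \ref{thm_q_k_canal} in expectation provides the needed sanity checks.
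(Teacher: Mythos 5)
Your treatment of the non-canalizing variables ($j>k$) contains a genuine gap, not just unfinished bookkeeping. You claim that for the \emph{fixed} core $g$ with $v$ entries $\neq b_k$, the conditional probability that flipping $x_j$ changes $g$ is $\frac{2v(2^{n-k}-v)}{(2^{n-k})^2}$; this is false as a pointwise statement, and in fact no formula in $v$ alone can hold for a fixed core. For example, with $n-k=2$ and core $g=x\oplus y$ (so $v=2$), flipping either variable always changes $g$, giving conditional probability $1$, not $\tfrac{2\cdot 2}{16}=\tfrac14$ and not the value $\tfrac{v(2^{n-k}-v)}{\binom{2^{n-k}}{2}}=\tfrac23$ that the theorem requires. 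The stated activity $\frac{v(2^{n-k}-v)}{2^{n-1}(2^{n-k}-1)} = \frac{1}{2^k}\cdot\frac{v(2^{n-k}-v)}{\binom{2^{n-k}}{2}}$ is obtained (as in the paper's proof) by modeling the pair $\bigl(g(\mathbf x),\,g(\mathbf x\oplus e_j)\bigr)$ as two entries drawn \emph{without replacement} from a truth table containing $v$ entries $\neq b_k$, i.e.\ effectively averaging over cores with exactly $v$ off-entries; that is where the denominator $2^{n-k}(2^{n-k}-1)=2\binom{2^{n-k}}{2}$ comes from, versus your with-replacement $(2^{n-k})^2$. So reconciling the two forms is not a matter of adjusting counts within your fixed-core model — you must switch to this (expected-value) model, and your stated intention to keep $g$ fixed and "work out the bookkeeping" cannot succeed. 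Note that this issue does not arise for $\psi_r=\mathbb{P}(g\neq b_k)=v/2^{n-k}$, which is exact for a fixed core because only a single truth-table entry is read.

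For the canalizing variables ($j\le k$) your plan is essentially the paper's argument: condition on $x_1,\ldots,x_{j-1}$ missing their canalizing inputs (probability $2^{-(j-1)}$) and compute the probability that the residual function differs from $b_j$. The paper does this by summing over which later canalizing variable fires first, giving $\sum_{i=j+1}^{k}2^{-(i-j)}\chi(b_i\neq b_j)+2^{-(k-j)}\mathbb{P}(g\neq b_j)$, and then using that the canalized output is constant within a layer and \emph{always} alternates between consecutive layers to produce $\varphi$ and $\psi$; your peel-off-the-first-layer induction is a reasonable alternative organization and should yield the same recursions. One small correction: consecutive layers never share a canalized output (that is precisely what defines the layer decomposition), so the recursion $\psi_i=1-\psi_{i+1}$ reflects a forced alternation, not a "may or may not share" situation as you wrote; getting this right is what makes $\chi(b_i\neq b_j)$ depend only on the parity of $L(i)-L(j)$ and produces the two-step recursion for $\varphi$.
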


\begin{proof}
See Appendix.
\end{proof}

\section{Derrida Values of Networks Governed by $k$-canalizing functions}\label{4}
Gene regulatory networks need to be robust to perturbations. The so-called Derrida plot is a common technique to evaluate the robustness of a Boolean discrete dynamical system~\cite{Derrida1}. It describes how a perturbation of a given size propagates on average over time. If a small perturbation vanishes over time, the system is considered to be in the ordered regime. The network then typically has many steady states and short limit cycles. If the perturbation amplifies over time, the system is in the chaotic regime. A chaotic network typically possesses long limit cycles. Lastly, if the perturbation remains of similar size, then the system is situated in the narrow region between these regimes, often called the critical threshold. Many biological systems seem to operate at this ``edge of chaos''; they must be robust enough to withstand perturbations caused by environmental changes but also flexible enough to allow adaptation~\cite{Balleza, Nykter, Drossel08}.

In this section we formally define the concept of Derrida values in the framework of discrete dynamical systems, using an annealed approximation. Although this approximation corresponds to a system in which interactions are randomly rewired at each time step, it has been shown that its use does not significantly change the Derrida plot of random Boolean networks \cite{Fretter}. \new{}{With few exceptions \cite{Dimitrova15}, the calculation of Derrida values has thus far} required extensive Monte Carlo simulations~\cite{Kau2,Kau3}. We derive direct formulas for the Derrida values of Boolean networks. Especially for systems with many inputs, this offers a substantial improvement, since the time required to approximate the Derrida plot through simulations increases exponentially in the number of inputs.  

\begin{defn}\label{def_intro_derrida}
Let $F=(f_i)_{i=1}^N$ be a synchronous Boolean network of $N$ nodes. Let $I(f_i) \subseteq \{1,\ldots,N\}$ be the set of essential variables of $f_i$, \new{}{$n_i := |I(f_i)|$}. Moreover, let $\mathbf x,\mathbf y \in \{0,1\}^N$ be two system configurations that differ in $m$ coordinates. Lastly, let $J(f_i)=I(f_i) \cap \big\{j \mid x_j\neq y_j\big\} \in \{0, \ldots, m\}$ be the set of variables of $f_i$ where $\mathbf x$ and $\mathbf y$ differ. Then, for an initial perturbation of size $m$, the Derrida value of $F$ is defined as the average size of the perturbation after one update,
\begin{equation}\label{eq:derrida}
D(F,m) = \mathbb{E}\Big[d\big(F(\mathbf x),F(\mathbf y)\big) \ \big| \ d(\mathbf x,\mathbf y) = m\Big],
\end{equation}
where $d: \{0,1\}^N \times \{0,1\}^N \rightarrow \{0, 1, \ldots, N\}$ is the Hamming distance (the standard $\ell^1$ metric) and the expected value is taken uniformly over all pairs of configurations with distance $m$.
\end{defn}

\begin{theorem}\label{thm_derrida1}
The Derrida value of a synchronous Boolean network $F=(f_i)_{i=1}^N$ can be expressed as a weighted sum of the normalized average $c$-sensitivities of its update functions,
\[D(F,m) =  \sum_{i=1}^N\p\left(f_i(\mathbf x) \neq f_i(\mathbf y) \ \big| \ d(\mathbf x,\mathbf y) =m \right) = \sum_{i=1}^N \sum_{c=0}^m\ \p\left(\left|J(f_i)\right| = c\right) s^{f_i}_c,\]
where $\big|J(f_i)\big|$ follows a hypergeometric distribution,
\[\p\big(|J(f_i)| = c\big) = \frac{\binom mc \binom{N-m}{n_i-c}}{\binom N{n_i}} = \frac{\binom {n_i}c \binom{N-n_i}{m-c}}{\binom N{m}}.\]
\end{theorem}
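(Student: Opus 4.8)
The plan is to separate the statement into three essentially independent pieces: linearity of expectation to reduce to one coordinate function at a time, a symmetry (exchangeability) argument to identify the relevant conditional probability with a normalized average $c$-sensitivity, and elementary counting for the hypergeometric weights.

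First, I would write the Hamming distance between the two successor configurations as a sum of indicators, $d\big(F(\mathbf x),F(\mathbf y)\big) = \sum_{i=1}^N \chi[f_i(\mathbf x) \neq f_i(\mathbf y)]$, and apply linearity of expectation to the conditional expectation defining $D(F,m)$ in \eqref{eq:derrida}. Since the conditional expectation of an indicator is a conditional probability, this yields at once the first claimed equality $D(F,m) = \sum_{i=1}^N \mathbb{P}\big(f_i(\mathbf x) \neq f_i(\mathbf y) \mid d(\mathbf x,\mathbf y) = m\big)$.

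Second, I fix $i$ and condition additionally on $|J(f_i)| = c$. The key observation is that $f_i$ depends only on the coordinates in $I(f_i)$, so the event $\{f_i(\mathbf x)\neq f_i(\mathbf y)\}$ is a function only of $\mathbf x|_{I(f_i)}$ and of the set $J(f_i)$ of essential coordinates on which $\mathbf x$ and $\mathbf y$ disagree. To make the conditional distribution transparent I would generate a uniform distance-$m$ pair as $\mathbf x$ uniform on $\{0,1\}^N$ together with an $m$-subset $S \subseteq \{1,\ldots,N\}$ uniform, putting $\mathbf y = \mathbf x \oplus e_S$; this is a bijective reparametrization of the set of ordered distance-$m$ pairs, and $d$ being symmetric it carries the right distribution. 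Conditioning on $|S\cap I(f_i)| = c$, the trace $S \cap I(f_i)$ becomes uniform over the $\binom{n_i}{c}$ size-$c$ subsets of $I(f_i)$ — for each such subset there are exactly $\binom{N-n_i}{m-c}$ admissible ways to complete $S$ — while $\mathbf x|_{I(f_i)}$ stays uniform on $\{0,1\}^{n_i}$ and independent of it. Averaging $\chi[f_i(\mathbf z) \neq f_i(\mathbf z \oplus e_T)]$ over $\mathbf z$ uniform on $\{0,1\}^{n_i}$ and $T$ uniform among size-$c$ subsets is, by Definition \ref{def_m_sens} applied to $f_i$ viewed as a function of its $n_i$ essential variables, exactly $S^{f_i}_c/\binom{n_i}{c} = s^{f_i}_c$ (the degenerate case $c=0$ agreeing with the convention $S^{f_i}_0 \equiv 0$). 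The law of total probability over $c = 0,\ldots,m$ then gives $\mathbb{P}\big(f_i(\mathbf x)\neq f_i(\mathbf y) \mid d = m\big) = \sum_{c=0}^m \mathbb{P}\big(|J(f_i)| = c\big)\, s^{f_i}_c$, which combined with the first step is the asserted formula. For the distribution of $|J(f_i)|$ itself I would count directly: of the $\binom Nm$ equally likely subsets $S$, precisely $\binom{n_i}{c}\binom{N-n_i}{m-c}$ meet the fixed $n_i$-set $I(f_i)$ in a set of size $c$; dually, fixing $S$ and asking where $I(f_i)$ may sit gives $\binom mc \binom{N-m}{n_i-c}$. These are the two displayed expressions, equal by the standard symmetry of the hypergeometric distribution.

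I expect the only step needing genuine care to be the conditioning argument: one must verify that conditioning on $|J(f_i)| = c$ leaves the pair $\big(\mathbf x|_{I(f_i)},\, J(f_i)\big)$ distributed exactly as a uniform base point together with an independent uniformly random size-$c$ perturbation set, so that the conditional probability equals the unweighted average appearing in the definition of $s^{f_i}_c$ rather than some reweighted variant. Once the uniform distance-$m$ pair is realized in the form $(\mathbf x,\mathbf x\oplus e_S)$ this is a short exchangeability argument, but it is the conceptual heart of the proof; the remaining ingredients are linearity of expectation and binomial-coefficient bookkeeping.
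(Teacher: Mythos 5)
Your proposal is correct and follows essentially the same route as the paper's (much terser) proof: decompose $d\big(F(\mathbf x),F(\mathbf y)\big)$ coordinatewise, condition on $|J(f_i)|$, identify the conditional probability with $s^{f_i}_c$, and count to get the hypergeometric weights. Your careful reparametrization of a uniform distance-$m$ pair as $(\mathbf x, \mathbf x\oplus e_S)$ and the exchangeability check are exactly the details the paper leaves implicit (and your appeal to linearity of expectation, rather than the paper's independence of synchronous updates, is all that is actually needed).
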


\begin{proof}
Since $\mathbf x$ and $\mathbf y$ are synchronously updated, the update of each component is independent from the update of other components. This implies that the Derrida value is simply the sum of the probabilities that $f_i(\mathbf x)$ and $f_i(\mathbf y)$ differ after the update, $i=1,\ldots,N$, which equals the normalized average $c$-sensitivity of $f_i$. Conditioning with respect to $|J(f_i)|$ leads to the second equality.
$J(f_i)$ is the intersection of two sets so that its magnitude $|J(f_i)|$ follows a hypergeometric distribution.
\end{proof}

We will now use this theorem together with the results from Section \ref{3} to calculate average Derrida values for different Boolean networks.

\begin{figure}
\begin{center}
\includegraphics[width=.452\textwidth]{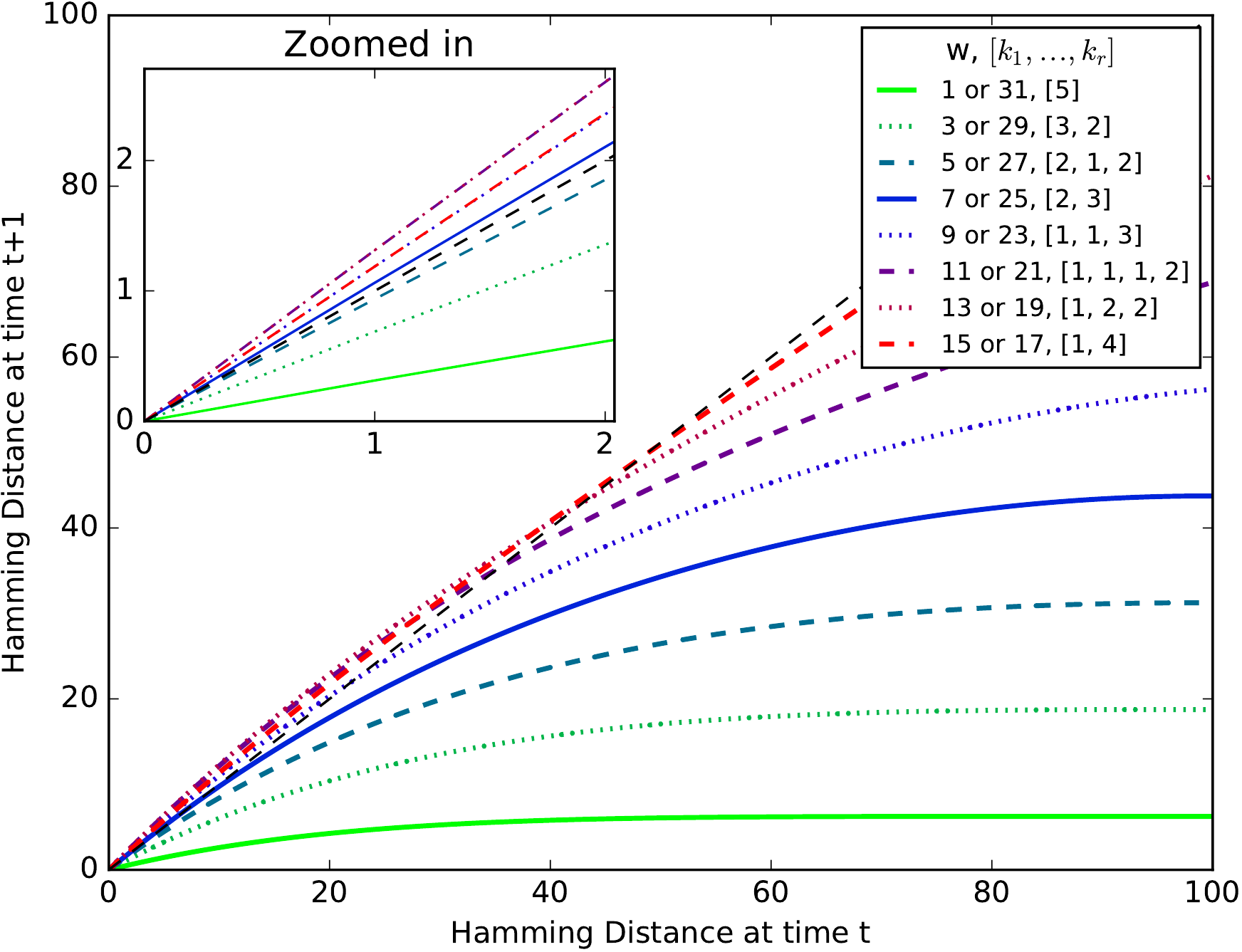} \includegraphics[width=.533\textwidth]{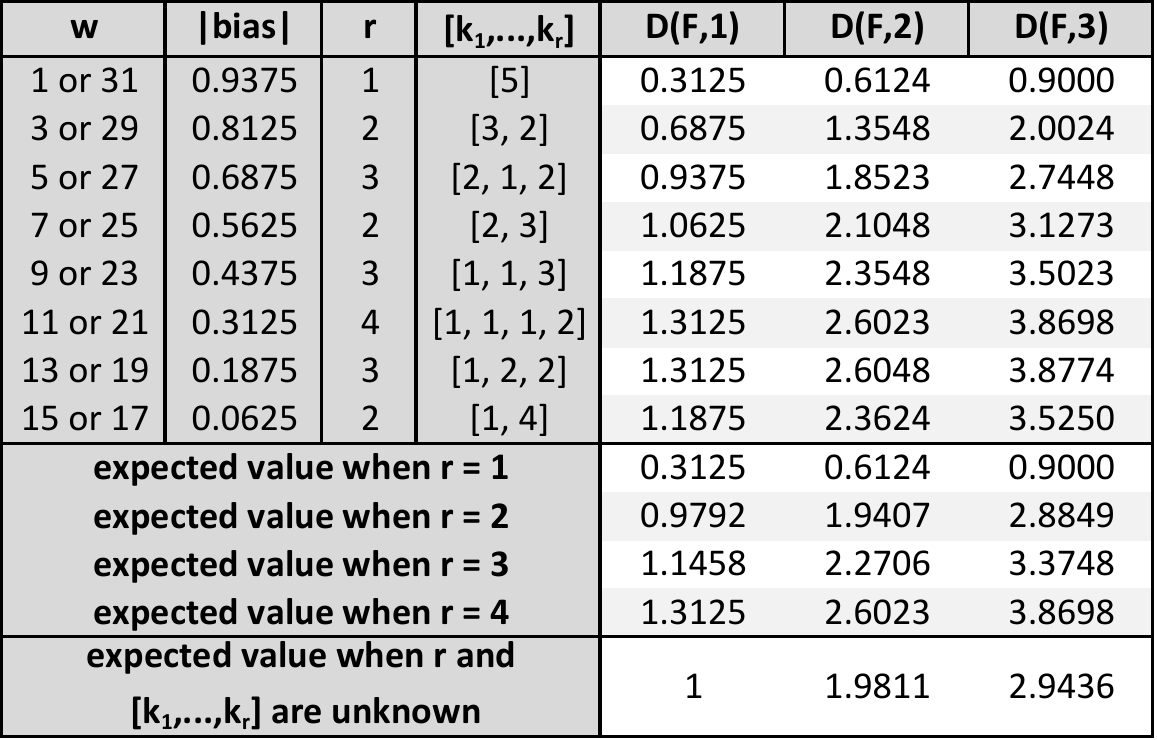}
\end{center}
\caption{Derrida plot for networks of $N=100$ genes governed by NCFs with $n=5$ inputs and varying layer structure. A black dashed line shows the line $y=x$. The table shows the Derrida values for perturbations of size up to $3$ (first 8 rows), as well as average values for cases when only the layer number but not the exact layer structure is known (rows 9-12) and average values when both are unknown (last row).}
\label{fig:derrida_plot}
\end{figure}

The Hamming weight $w$ of a Boolean function is defined as the number of 1s in its truth table, and the bias is the probability that a randomly chosen entry in the truth table is 1 minus the probability that it is 0. \new{}{The absolute bias is the absolute value of the bias.} A Boolean function with equally many 0s and 1s has bias 0 and is called balanced. Constant functions are the most biased with absolute bias 1, and it is easy to see that there is a 1-1 correspondence between the absolute bias and the layer structure of an NCF (see also \cite{Yua1}). Figure~\ref{fig:derrida_plot} depicts the Derrida values for networks of $N=100$ genes, which are governed by NCFs with $n=5$ inputs and varying layer structures (specified by the Hamming weight $w$ or, correspondingly, the number of variables in each dominance layer). The calculation of all the $800$ plotted values took less than a second on a regular desktop computer. In networks governed by highly unbalanced NCFs with Hamming weights 1, 3, 29 and 31, small perturbations vanish on average over time; these networks operate in the stable regime. Networks of NCFs with Hamming weights 5, 7, 25 and 27 operate close to the critical threshold, while networks of NCFs with Hamming weights between 11 and 23 operate in the chaotic regime. Surprisingly, networks of NCFs with Hamming weights 11, 13, 19, and 21 are more chaotic than those governed by almost balanced NCFs with Hamming weights 15 and 17.
One possible explanation for this observation may be the layer number $r$. NCFs with Hamming weight 15 or 17 consist of two layers, while NCFs with Hamming weight 13 and 19 (11 and 21) have three (four) different layers. The number of layers is positively correlated with the Derrida value for small perturbations (see rows 9-12 in Figure \ref{fig:derrida_plot}B and Table \ref{tab:spearman}). Similarly, the number of variables in the most dominant layer, $k_1$, and the absolute bias are negatively correlated. Interestingly, the correlation of the Derrida value for small perturbations with $k_1$ and with the absolute bias remains high for NCFs with many inputs, whereas the correlation with the layer number decreases with increasing number of inputs (Table \ref{tab:spearman}).

\begin{table}
\begin{center}
\includegraphics[width=0.49\textwidth]{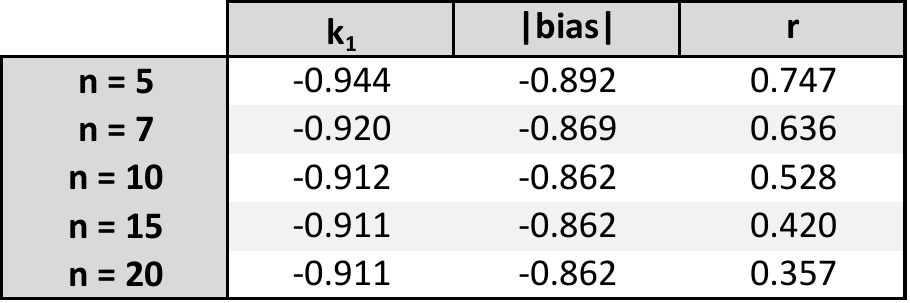}
\end{center}
\caption{For a network governed by NCFs with $n$ inputs, this table shows the Spearman correlations between the Derrida value of a single perturbation $D(F,1)$, and the number of most dominant variables $k_1$ (first column), the absolute bias (second column), and the layer number $r$ (last column).}
\label{tab:spearman}
\end{table}

Table \ref{tab:impact_single_pert_boolean} shows the impact of a single perturbation on networks governed by canalizing functions with $n=7$ inputs and canalizing depth $k=4$ for \new{}{varying layer numbers ($r$), layer structures ($k_1,\ldots, k_r$) and} numbers of entries $\neq b_k$ in the truth table of the core function ($v$). As for NCFs, the Derrida value increases with increasing layer number and decreases with increasing number of most dominant variables, $k_1$. Each combination of layer structure and $v$ yields a canalizing function with a different absolute bias and Figure \ref{fig:bias} exhibits the connection between Derrida value and absolute bias. Again, almost balanced functions give rise to more robust networks than functions with intermediate absolute bias, while networks governed by highly biased functions operate in the stable regime. Moreover, networks with a higher proportion of canalizing variables are more robust. The gain of additional dynamical stability decreases however quickly when adding canalizing variables (see \cite{Jansen13} for similar findings).

\begin{table}
\begin{center}
\includegraphics[width=0.7\textwidth]{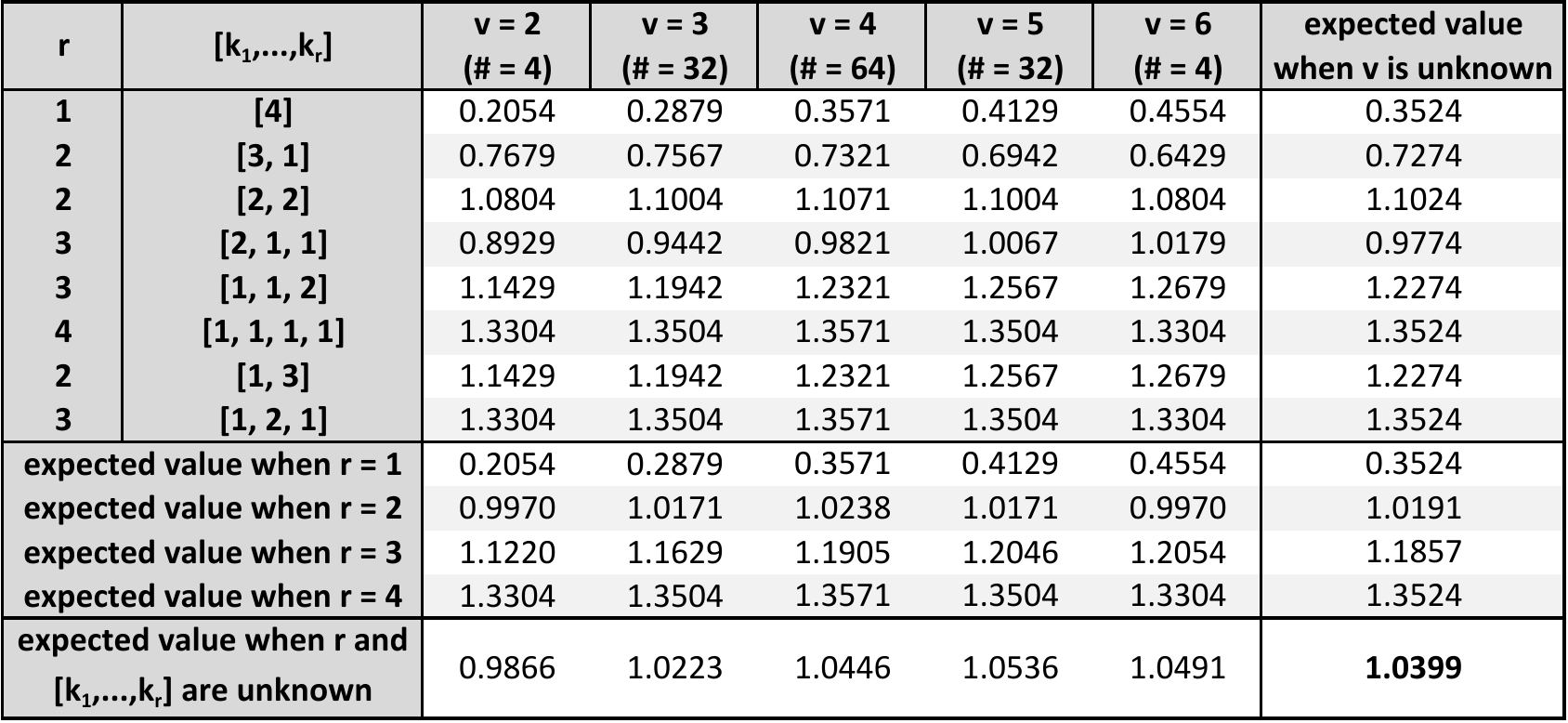}
\end{center}
\caption{Impact of a single perturbation ($D(F,1)$) on networks governed by canalizing functions with $n=7$ inputs and canalizing depth $k=4$ for \new{}{varying layer numbers ($r$), layer structures ($k_1,\ldots, k_r$) and} numbers of entries $\neq b_k$ in the truth table of the core function ($v$).}
\label{tab:impact_single_pert_boolean}
\end{table}

\begin{figure}
\begin{center}
\includegraphics[width=\textwidth]{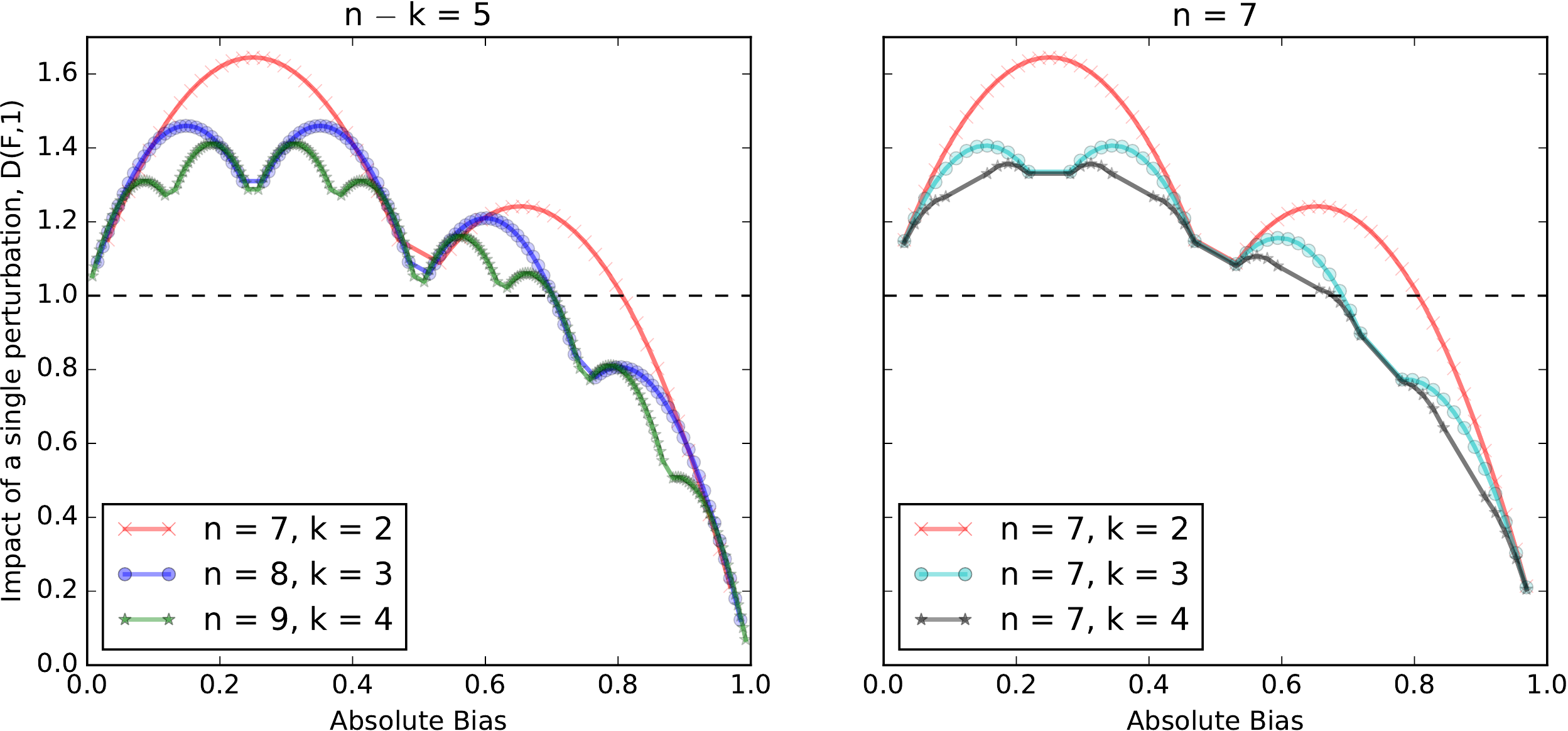}
\end{center}
\caption{For all different functions of $n$ variables with canalizing depth $k$, the impact of a single perturbation ($D(F,1)$) is plotted against the absolute bias of the function. In the left plot, the number of non-canalizing variables is constant ($n-k = 5$), while in the right plot, the total number of variables is constant ($n=7$). For visualisation purposes, the scatter points are connected by a line.}
\label{fig:bias}
\end{figure}

Theorem \ref{thm_q_k_canal} and Theorem \ref{thm_derrida3} are used in Table \ref{tab:comp_k_canal} to investigate the difference between $k$-canalizing functions (i.e., functions with canalizing depth $\geq k$) and functions with exact canalizing depth $k$. As expected, $k$-canalizing functions give rise to slightly more stable networks. \new{When the number of non-canalizing variables increases, the difference in robustness vanishes, however, quickly since the vast majority of $k$-canalizing functions has indeed also exact canalizing depth $k$ when $n \gg k$ \cite{He16}.}{This difference in robustness is however only of noticeable size for functions with few non-canalizing variables; if $k \ll n$, the vast majority of $k$-canalizing functions have also exact canalizing depth $k$ \cite{He16}.} To our knowledge, the number of non-canalizing functions with a given Hamming weight is unknown. For $n-k>4$, we therefore approximated the distribution by generating $10^7$ random non-canalizing functions. For $n-k\leq 4$, we used exhaustive enumeration.

\begin{table}
\begin{center}
\includegraphics[width=\textwidth]{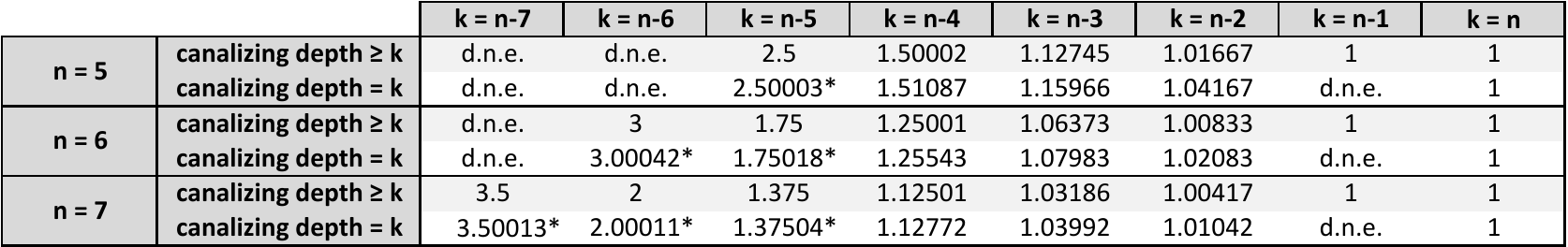}
\end{center}
\caption{Impact of a single perturbation ($D(F,1)$) on networks governed by $k$-canalizing functions (gray shaded rows) or functions with exact canalizing depth $k$ (white rows) for varying numbers of inputs ($n$) and varying values of $k$.  Approximated values are marked with *.}
\label{tab:comp_k_canal}
\end{table}

In all these analyses, we did not specify the size of the network since we focused on the impact of a single perturbation, for which it does not matter. When considering larger perturbations, the network size has a theoretical impact on the Derrida value. \new{}{By comparing the computed values for different network sizes, we found that the empirical impact is, however, negligible} as long as the proportion of perturbed nodes remains small.

\section{Derrida Values of Stochastic Discrete Dynamical Systems}\label{5}
Gene regulatory networks are stochastic in nature. A recently introduced generalization of Boolean networks, called Stochastic Discrete Dynamical Systems (SDDS), captures this inherent stochasticity by assigning gene-specific activation 
probabilities $p_i^\uparrow$ and degradation probabilities $p_i^\downarrow$, which describe how likely a specific state change happens at a given update step~\cite{SDDS}. 
This framework allows modeling of different time scales as well as stochastic variability, 
while preserving the simplicity of a Boolean network model. The Derrida value concept (Equation \ref{eq:derrida}) can be directly applied to this type of system, and we derive a formula for its computation.

\new{}{\begin{defn} \cite{SDDS}
An SDDS in the variables $x_1, \ldots, x_n$ is defined as a collection of $n$ triplets 
$$F = \left(f_i,p_i^\uparrow,p_i^\downarrow\right)_{i=1}^N,$$
where $f_i : \{0,1\}^n \rightarrow \{0,1\}$ is the update function, $p_i^\uparrow$ the activation probability, and $p_i^\downarrow$ the degradation probability for $x_i$ for all $i = 1, \ldots, n$.

For all $i$ such that $f_i(\mathbf x) \neq x_i$,
\begin{align*}
\mathbb{P}(x_i \rightarrow x_i) &= \begin{cases} 1-p_i^\uparrow & \ \text{if}\ x_i < f_i(x),\\
1-p_i^\downarrow & \ \text{if}\ x_i > f_i(x).\end{cases}\\
\mathbb{P}(x_i \rightarrow f_i(\mathbf x)) &= \begin{cases} p_i^\uparrow & \qquad \text{if}\ x_i < f_i(x),\\
p_i^\downarrow & \qquad \text{if}\ x_i > f_i(x).\end{cases}\\
\end{align*}
\end{defn}}

\begin{theorem}\label{thm_derrida4}
Let $F=\left(f_i,p_i^\uparrow,p_i^\downarrow\right)_{i=1}^N$ be an SDDS. \new{}{As in Defintion \ref{def_intro_derrida}, let $I(f_i)$ be the set of inputs of $f_i$ with $n_i := |I(f_i)|$. Further, let
$$H_{N,m,n}(c) = \frac{\binom mc \binom{N-m}{n-c}}{\binom N{n}}$$
denote the hypergeometric probability mass function.
The Derrida value of F is
\begin{align*}
D(F,m) = \sum_{i=1}^N \sum_{c=0}^m\ 
\begin{cases}
\frac mN H_{N-1,m-1,n_i-1}(c) \left[\gamma_1s^{f_i}_{c+1} + \gamma_2\left(1-s^{f_i}_{c+1}\right)\right] +  & \\
\quad + \frac{N-m}N H_{N-1,m,n_i-1}(c) \left[\gamma_3s^{f_i}_c + \gamma_4\left(1-s^{f_i}_c\right)\right] & \ \text{if}\ i \in |I(f_i)|\\
& \\
\frac mN H_{N-1,m-1,n_i}(c) \left[\gamma_1s^{f_i}_c + \gamma_2\left(1-s^{f_i}_c\right)\right] +  &\\
\quad + \frac{N-m}N H_{N-1,m,n_i}(c) \left[\gamma_3s^{f_i}_c + \gamma_4\left(1-s^{f_i}_c\right)\right] & \ \text{if}\ i \not\in |I(f_i)|\\
\end{cases}
\end{align*}
where 
\begin{align*}
\gamma_1 &= 1-\frac12 \left(\pa + \pd\right) + \pa \pd,\\
\gamma_2 &= 1 - \frac 12 \left(\pa + \pd\right),\\
\gamma_3 &= \frac 12 \left(\pa + \pd\right),\\
\gamma_4 &= \frac 12 \left(\pa + \pd\right) -\frac 12 \left(\pa\pa + \pd\pd\right).
\end{align*}}
\end{theorem}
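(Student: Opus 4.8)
The plan is to run the argument of the proof of Theorem~\ref{thm_derrida1} with one extra conditioning layer inserted for the stochastic update. Write $x_i'$ (resp.\ $y_i'$) for the random value of coordinate $i$ after one stochastic update of the track started at $\mathbf x$ (resp.\ at $\mathbf y$), where all update coins are independent — across coordinates and across the two tracks. Because the coordinates of an SDDS are updated independently, linearity of expectation gives
\[
D(F,m)=\sum_{i=1}^N\mathbb{P}\left(x_i'\neq y_i' \ \big| \ d(\mathbf x,\mathbf y)=m\right),
\]
so it suffices to evaluate the $i$-th summand for each $i$.

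First I would fix $i$ and condition on the two quantities that drive the update at coordinate $i$: whether $i$ is itself one of the $m$ perturbed coordinates (probability $m/N$, and not with probability $(N-m)/N$), and whether $f_i(\mathbf x)=f_i(\mathbf y)$. For the second, conditioning further on the number of perturbed \emph{inputs} of $f_i$ and invoking the definition of the normalized average $c$-sensitivity exactly as in Theorem~\ref{thm_derrida1} gives $\mathbb{P}(f_i(\mathbf x)\neq f_i(\mathbf y))=s^{f_i}_{c'}$ whenever $f_i$ has $c'$ perturbed inputs. The bookkeeping of $c'$ is where the case split $i\in I(f_i)$ vs.\ $i\notin I(f_i)$ and the index shift arise. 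If $i\in I(f_i)$, the remaining $n_i-1$ inputs lie among the other $N-1$ coordinates, so the number $c$ of perturbed ones among them is hypergeometric with parameters $(N-1,m-1,n_i-1)$ when $i$ is perturbed and $(N-1,m,n_i-1)$ when it is not, while the total number of perturbed inputs is $c+1$ or $c$ respectively — whence the $s^{f_i}_{c+1}$ in the first bracket. If $i\notin I(f_i)$, all $n_i$ inputs lie among the other $N-1$ coordinates, the hypergeometric parameters are $(N-1,m-1,n_i)$ or $(N-1,m,n_i)$, and the total number of perturbed inputs is simply $c$ in both sub-cases. This accounts for every hypergeometric factor and $c$-sensitivity subscript in the statement.

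It then remains to identify $\gamma_1,\dots,\gamma_4$ as the four conditional probabilities $\mathbb{P}(x_i'\neq y_i'\mid\cdot\,)$, the conditioning being ``coordinate $i$ perturbed or not'' paired with ``the $f_i$-values differ or not'', in the order $\gamma_1$: $i$ perturbed and $f_i$-values differ; $\gamma_2$: $i$ perturbed and $f_i$-values equal; $\gamma_3$: $i$ unperturbed and $f_i$-values differ; $\gamma_4$: $i$ unperturbed and $f_i$-values equal. Each is a short finite case analysis over the bits $x_i,y_i,f_i(\mathbf x),f_i(\mathbf y)$ compatible with the conditioning, using that a coordinate already equal to its update target does not change, that a $0\!\to\!1$ move happens with probability $\pa$ and a $1\!\to\!0$ move with probability $\pd$, and that the two tracks move independently. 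For instance, when $i$ is unperturbed and the $f_i$-values differ, exactly one track already sits at its target while the other must move — up with probability $\pa$ if $x_i=y_i=0$, down with probability $\pd$ if $x_i=y_i=1$ — so averaging over the uniform bit $x_i$ gives $\gamma_3=\tfrac12(\pa+\pd)$. The other three come out the same way; in the two ``$f_i$-values equal'' cases one additionally averages over the common value of $f_i$ being $0$ or $1$, and for $\gamma_1$ over whether $x_i$ already equals $f_i(\mathbf x)$, which is what produces the quadratic terms — e.g.\ $\gamma_1=\tfrac12\cdot 1+\tfrac12(1-\pa-\pd+2\pa\pd)$. Finally, for each $i$ one multiplies the hypergeometric weight of $c$ by the matching bracketed factor $\gamma\,s^{f_i}_\bullet+\gamma\,(1-s^{f_i}_\bullet)$, weights the ``$i$ perturbed'' branch by $m/N$ and the ``$i$ unperturbed'' branch by $(N-m)/N$, and sums over $c$ and $i$; this is the claimed formula. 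As a consistency check, $\pa=\pd=1$ yields $\gamma_1=\gamma_3=1$ and $\gamma_2=\gamma_4=0$, and the standard hypergeometric splitting identity (conditioning on whether coordinate $i$ belongs to the perturbed set) collapses the result to Theorem~\ref{thm_derrida1}.

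The hard part will be the case analysis for the $\gamma_j$: the asymmetry between $\pa$ and $\pd$, together with the ``already at target, hence deterministic'' possibility, breaks the naive symmetries, so the directions of the moves must be tracked carefully. A second point requiring care — which I would flag explicitly — is the averaging over the value of $f_i(\mathbf x)$ (a balancedness / annealed convention already implicit in the $c$-sensitivity formalism); it is precisely this step that lets the answer depend on $f_i$ only through the numbers $s^{f_i}_c$, since otherwise $\gamma_1,\gamma_2,\gamma_4$ would retain a dependence on the Hamming weight of $f_i$.
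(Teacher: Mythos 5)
Your proposal is correct and follows essentially the same route as the paper's proof: decompose $D(F,m)$ by coordinate, condition on whether node $i$ is perturbed and whether $f_i(\mathbf x)\neq f_i(\mathbf y)$ (with the hypergeometric bookkeeping and the $i\in I(f_i)$ versus $i\notin I(f_i)$ index shift), and identify $\gamma_1,\ldots,\gamma_4$ as the four conditional probabilities via the same finite case analysis, including the uniform averaging over the value of $f_i$ that the paper also uses implicitly. Your computed values of $\gamma_1$ and $\gamma_3$ match the paper's, and the sketched derivations of $\gamma_2,\gamma_4$ proceed exactly as in the appendix.
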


\begin{proof}
See Appendix.
\end{proof}


\section{Discussion}\label{6}
A characteristic ability of organisms is their capability to operate in highly variable environments, striking a fine balance between the ability to adapt to changing conditions and the robustness to function predictably. This extends to the molecular networks that drive everything from  embryonal development to metabolism. Understanding the mechanisms that confer this ability is one of the central challenges in studying biology and its fundamental principles. As in physics, mathematical modeling and analysis is an enabling technology for the drive to connect structural properties of dynamic biological networks to their dynamics. The study of robustness, as instantiated in the concept of canalization, is no exception, and many such studies have been published, see, e.g., \cite{Balleza,Kau3,Fretter,Rocha}. 

Here, we have focused on one computational instantiation of robustness, that of canalization in the context of Boolean discrete dynamical systems. This includes, as a special case, nested canalization. We have provided a collection of practical and theoretical tools for the analysis of systems governed by $k$-canalizing functions. In order to study the impact of a simultaneous change in more than one input of a function, we have, at the function level, generalized the concept of sensitivity. At the network level, this allowed us to provide  easy-to-use closed form formulas for the Derrida values, a commonly used metric for the stability of networks. We explored the relationship between Derrida value, the canalizing depth, absolute bias, number of layers and number of most dominant variables of a function. In addition, we derived formulas for the Derrida values of a stochastic discrete dynamical system, a modeling framework that can cope with the inherent stochasticity of gene regulatory networks.

The presented formulas significantly simplify the study of robustness via Derrida values. While we started to disentangle the influence of the different parameters of a canalizing function on its robustness, much work remains to be done. Knowledge of the exact number of non-canalizing functions with a certain number of variables and a certain Hamming weight would be helpful in this effort.

\bibliographystyle{elsarticle-num}
\bibliography{ncf_bib_latest}

\section*{Appendix}
\subsubsection*{Theorem \ref{thm_q_k_canal}}
\begin{proof}
Let $f_k(x_1,\ldots,x_n)$ be a $k$-canalizing function with canalizing order $x_1,x_2, \ldots,x_k$, inputs $a_i$ and outputs $b_i$, $1\leq i \leq k$. 
We will use a similar argument as in \cite{Layne12} to find the expected activities of $f_k$. By definition, the activity of $x_j$ in $f_k$ is the probability that a change in $x_j$ changes the output of $f_k$. If $x_j$ is a canalizing variable (i.e., $j \leq k$), a change in $x_j$ can only affect the output of $f_k$ if none of the variables $x_1,\ldots, x_{j-1}$ receive their canalizing input. Thus,
\begin{align*}
\alpha^{f_k}_j &= \p\big({f_k}(\mathbf x) \neq {f_k}(\mathbf x \oplus e_j)\big)\\
&= \p(x_1 \neq a_1, \ldots, x_{j-1} \neq a_{j-1})\p\big({f_k}(\mathbf x) \neq {f_k}(\mathbf x \oplus e_j)\mid x_1 \neq a_1, \ldots, x_{j-1} \neq a_{j-1}\big).
\end{align*} 

Since each canalizing variable receives its canalizing input with probability $\frac 12$,
$$\p(x_1 \neq a_1, \ldots, x_{j-1} \neq a_{j-1}) = \frac{1}{2^{j-1}}$$

For any $j\leq k$, the subfunction ${f_k}(1-a_1,\ldots,1-a_{j-1},x_j,x_{j+1},\ldots,x_n)$ is canalizing in $x_j$ and can therefore be written as $(x_j+a_j)\bar g(x_{j+1},\ldots,x_n) + b_j$ for some Boolean polynomial $\bar g\not\equiv 0$ as in \cite{He16}. Hence,

\begin{align*}
\p\big({f_k}(\mathbf x) \neq {f_k}(\mathbf x \oplus e_j)\mid x_1 \neq a_1, \ldots, x_{j-1} \neq a_{j-1}\big) &= \p\big(\bar g(x_{j+1},\ldots,x_n) = 1\big).
\end{align*}

If $j<k$, $\bar g\not\equiv \mathbf 1$ since a $k$-canalizing function must be essential in all its canalizing variables (Remark \ref{rem_ncf}e). Both constant functions are thus excluded from the set of possible choices for $\bar g$ so that 
$$\p\big(\bar g(x_{j+1},\ldots,x_n) = 1\big) = \frac 12.$$

If $j=k$, $\bar g \equiv \mathbf 1$ does not cause a contradiction. In this case, there are $2^{2^{n-k}}$ choices of Boolean functions for $\bar g$, half of which satisfy $\bar g(x_{k+1},\ldots,x_n) = 1$. Only $\mathbf 0$ is excluded from the set of choices for $\bar g$ so that
\begin{equation}\label{eq_almost_half1}
\p\big(\bar g(x_{k+1},\ldots,x_n) = 1\big) = \frac{\frac 12 2^{2^{n-k}}}{2^{2^{n-k}}-1} = \frac{2^{2^{n-k}-1}}{2^{2^{n-k}}-1}
\end{equation}

On the other hand, if $x_j$ is a non-canalizing variable ($j>k$), then a change in $x_j$ can only affect the output of ${f_k}$ if none of the $k$ canalizing variables receive their canalizing input. Thus,
\begin{align*}
\alpha^{f_k}_j &= \p\big({f_k}(\mathbf x) \neq {f_k}(\mathbf x \oplus e_j)\big)\\
&= \p(x_1 \neq a_1, \ldots, x_{k} \neq a_{k})\p\big({f_k}(\mathbf x) \neq {f_k}(\mathbf x \oplus e_j)\mid x_1 \neq a_1, \ldots, x_{k} \neq a_{k}\big)\\
&= \frac 1{2^{k}}\p\big(g(x_{k+1},\ldots,x_n) \neq g(y_{k+1},\ldots,y_n)\big),
\end{align*}
where $g\not\equiv b_k$ from Definition \ref{def2.3} and $\mathbf y = \mathbf x \oplus e_j$. Similar arguments as for Equation \ref{eq_almost_half1} yield
\begin{equation}\label{eq_almost_half2}
\p\big(g(x_{k+1},\ldots,x_n) \neq g(y_{k+1},\ldots,y_n)\big) = \frac{\frac 12 2^{2^{n-k}}}{2^{2^{n-k}}-1} = \frac{2^{2^{n-k}-1}}{2^{2^{n-k}}-1}.
\end{equation}
\end{proof}

\subsubsection*{Theorem \ref{thm_av_m_sens}}
\begin{proof}
Let $\mathbf x \in \{0,1\}^n$ be a randomly chosen vector and let $\mathbf y = \mathbf x \oplus e_I$ be its $c$-Hamming neighbor, $d(\mathbf x,\mathbf y) = c$. \new{}{Let ${f_k}$ be a $k$-canalizing function in the variable order $x_1, x_2, \ldots, x_k$. Since the expected activity of all non-canalizing variables is equal (Theorem \ref{thm_q_k_canal}), the probability that ${f_k}(\mathbf x)$ and ${f_k}(\mathbf y)$ differ only depends on the first variable ($\min(I)$) where $\mathbf x$ and $\mathbf y$ differ,}
 $$ \frac 1{2^n}  \sum_{\mathbf x \in \{0,1\}^n} \chi\left[{f_k}\left(\mathbf x\right) \neq {f_k}\left(\mathbf x \oplus e_I\right)\right] = \p\left({f_k}(\mathbf x) \neq {f_k}(\mathbf x \oplus e_I)\right) = \p\left({f_k}(\mathbf x) \neq {f_k}(\mathbf x \oplus e_{\min(I)})\right) = \alpha^{f_k}_{\min(I)}.$$

There are $\binom nc$ $c$-subsets in the set $\{1,\ldots,n\}$, $\binom {n-j}{c-1}$ of which contain $j$ as its lowest element, $j=1,\ldots,n-c+1$.
Therefore, 
\begin{align*}
S^{f_k}_c &=\sum_{\substack{I \subseteq \{1,2,\ldots,n\}\\ | I | = c}}  \frac 1{2^n}  \sum_{\mathbf x \in \{0,1\}^n} \chi[{f_k}(\mathbf x) \neq {f_k}(\mathbf x \oplus e_I)] =\sum_{\substack{I \subseteq \{1,2,\ldots,n\}\\ | I | = c}} \alpha^{f_k}_{\min(I)} = \sum_{j=1}^{n-c+1}\binom{n-j}{c-1} \alpha^{f_k}_{j}.
\end{align*}
\end{proof}

\subsubsection*{Theorem \ref{thm_derrida3}}
\begin{proof}
(i) As in the proof of Theorem \ref{thm_q_k_canal}, the activity of any canalizing variable $x_j$ ($j\leq k$) is
\begin{align*}
\alpha^f_j &= \p(x_1 \neq a_1, \ldots, x_{j-1} \neq a_{j-1})\p\big(f(\mathbf x) \neq f(\mathbf x \oplus e_j)\mid x_1 \neq a_1, \ldots, x_{j-1} \neq a_{j-1}\big)\\
&= \frac 1{2^{j-1}}\p\big(f(1-a_1,\ldots,1-a_j,x_{j+1},\ldots,x_n) \neq b_j\big)
\end{align*} 

Due to the canalizing nature of $f$, the probability can be further written as

\begin{align*}
& \p\big(f(1-a_1,\ldots,1-a_j,x_{j+1},\ldots,x_n) \neq b_j\big)\\
= &\sum_{i=j+1}^k \Big[ \p\big( x_{j+1} \neq a_{j+1}, \ldots, x_{i-1} \neq a_{i-1}, x_i = a_i \big)\\
& \ \ \ \qquad \qquad \cdot \p\big( f(1-a_1,\ldots,1-a_{i-1}, a_i, x_{i+1}, \ldots, x_n) \neq b_j \mid x_{j+1} \neq a_{j+1}, \ldots, x_{i-1} \neq a_{i-1}, x_i = a_i  \big) \Big]\\
&\ \ + \p\big( x_{j+1} \neq a_{j+1}, \ldots, x_k \neq a_k \big) \p\big( f(1-a_{1},\ldots,1-a_k, x_{k+1}, \ldots, x_n) \neq b_j \mid x_{j+1} \neq a_{j+1}, \ldots, x_k \neq a_k \big) \\
= &\sum_{i=j+1}^k \frac{1}{2^{i-j}} \chi(b_i \neq b_j) + \frac{1}{2^{k-j}} \p\big( g(x_{k+1},\ldots,x_n) \neq b_j \big).
\end{align*}

Let $L := L(j)$ and $L(i)$ be the layer of the variables $x_j$ and $x_i$, $i\leq k$. The canalized output is equal for all variables of the same layer and alternates among layers. Therefore, $b_i \neq b_j$ if and only if $L(i)-L(j)$ is odd, and the first sum can be rewritten as a sum over all $k_{L+2t-1}$ variables of every second layer $L+2t-1>L$ ($t\geq 1$),
\begin{align*}
\sum_{i=j+1}^k \frac{1}{2^{i-j}} \chi(b_i \neq b_j) &= \sum_{t=1}^{\lceil (r-L)/2 \rceil}  \sum_{s=1}^{k_{L+2t-1}} {\left(\frac{1}{2}\right)}^{k_1+\ldots+k_{L+2t-2}+s-j} \\
 \end{align*}

Also, since $g$ contains $v$ entries $\neq b_k$ in its truth table,
  \begin{align*}
  \psi_l := \p\left( g(x_{k+1},\ldots,x_n) \neq b_j \right) &= \begin{cases}
   \p\left( g(x_{k+1}, \ldots, x_n) \neq b_k \right) & \ \text{if $r-l$ is even}\\
   \p\left( g(x_{k+1},\ldots,x_n) = b_k \right) & \ \text{if $r-l$ is odd}
  \end{cases}\\
  &= \begin{cases}
   \frac{v}{2^{n-k}} & \ \text{if $r-l$ is even}\\
  1-\frac{v}{2^{n-k}} & \ \text{if $r-l$ is odd}
  \end{cases}
  \end{align*}

Altogether,
\begin{align*}
 \alpha^f_j  &= \frac{1}{2^{j-1}} \left[ \sum_{t=1}^{\lceil (r-L)/2 \rceil}  \sum_{s=1}^{k_{L+2t-1}} {\left(\frac{1}{2}\right)}^{k_1+\ldots+k_{L+2t-2}+s-j} + \frac{1}{2^{k-j}} \psi_l \right]\\
&= \underbrace{   \sum_{t=1}^{\lceil (r-L)/2 \rceil}  \sum_{s=0}^{k_{L+2t-1}-1} {\left(\frac{1}{2}\right)}^{k_1+\ldots+k_{L+2t-2}+s} }_{:=\varphi_l}   + \frac{1}{2^{k-1}} \psi_l,
\end{align*} 
\new{}{By induction, $\varphi_l$ and $\psi_l$ can be calculated recursively with
\begin{align*}
&\varphi_{r+1} = \varphi_r = 0, \qquad \varphi_i  = \varphi_{i+2} + \sum_{s=0}^{k_{i+1}-1} {\left(\frac 12\right)}^{k_1+\ldots+k_i+s} \ \text{for} \ i \geq 1,\\
&\psi_r = \frac{v}{2^{n-k}}, \qquad\quad\ \  \psi_i = 1-\psi_{i+1} \ \text{for} \ i \geq 1.
\end{align*}}

(ii) On the other hand, the activity of any non-canalizing variable $x_j$ ($j>k$) is simply
\begin{align*}
\alpha^f_j &= \p(x_1 \neq a_1, \ldots, x_{k} \neq a_{k})\p\big(f(\mathbf x) \neq f(\mathbf x \oplus e_j)\mid x_1 \neq a_1, \ldots, x_{k} \neq a_{k}\big)\\
&= \frac 1{2^{k}}\p\big(g(x_{k+1},\ldots,x_n) \neq g(x_{k+1},\ldots,x_{j-1},1-x_{j},x_{j+1},\ldots,x_n)\big)\\
&= \frac 1{2^{k}}\frac{v(2^{n-k}-v)}{\binom{2^{n-k}}{2}}\\
&= \frac{v(2^{n-k}-v)}{2^{n-1}(2^{n-k}-1)}
\end{align*} 
\end{proof}

\subsubsection*{Theorem \ref{thm_derrida4}}
\begin{proof}
Let $\mathbf x, \mathbf y \in \{0,1\}^N$ be two randomly chosen vectors that differ in $m$ positions. For each node $i \in \{1,\ldots,N\}$, define three events 
\begin{align*}
A_i&=\left\{x_i \neq y_i\right\}, \\
B_i&= \left\{f_i(\mathbf x)\neq f_i(\mathbf y) \ \text{before applying}\ \pa,\pd\right\},\\
C_i&= \left\{f_i(\mathbf x)\neq f_i(\mathbf y) \ \text{after applying}\ \pa,\pd\right\}.
\end{align*}

\begin{itemize}
\item Since $\mathbf x$ and $\mathbf y$ differ in $m$ out of $N$ positions, $\p(A_i) = \frac mN$. 
\item $\p(B_i | c) := \p\big(B_i\big| |J(f_i)| = c\big)$ is simply the normalized average $c-$sensitivity of $f$, i.e., $\p(B_i | c) = s^{f_i}_c$.
\item Lastly, the probability that $\mathbf x$ and $\mathbf y$ differ after applying the propensity probabilities needs to be calculated. This probability obviously depends on $A_i$ and $B_i$. If $x_i\neq y_i$ and $f_i(\mathbf x)\neq f_i(\mathbf y)$ before applying $\pa,\pd$, we can assume that $x_i=0,y_i=1$. Then, either $f_i(\mathbf x)=0,f_i(\mathbf y)=1$ or $f_i(\mathbf x)=1,f_i(\mathbf y)=0$, both with probability $\frac 12$. In the first case, there is no change in values so that the propensity probabilities $\pa,\pd$ play no role and $f_i(\mathbf x)\neq f_i(\mathbf y)$ after applying $\pa,\pd$ with probability $1$. In the second case, $f_i(\mathbf x)$ and $f_i(\mathbf y)$ only differ after applying $\pa,\pd$, if either both updates happen (probability $\pa\pd$) or neither update happens (probability $(1-\pa)(1-\pd)$). That means,
\[\gamma_1 := \p\left(C_i|A_i,B_i\right) = \frac 12 \cdot 1+\frac 12\left(\pa\pd + \left(1-\pa\right)\left(1-\pd\right)\right) = 1-\frac 12\left(\pa+\pd\right)+\pa\pd .\]
Similarly, we can derive
\begin{align*}
\gamma_2 :=\p\left(C_i|A_i,\neg B_i\right) &= 1 - \frac 12 \left(\pa + \pd\right),\\
\gamma_3 :=\p\left(C_i|\neg A_i,B_i\right) &= \frac 12 \left(\pa + \pd\right),\\
\gamma_4 :=\p\left(C_i|\neg A_i,\neg B_i\right) &= \frac 12 \left(\pa + \pd\right) -\frac 12 \left(\pa\pa + \pd\pd\right).
\end{align*}
\end{itemize}
\new{}{
Since the exact update functions of $F$ are known, we need to distinguish two cases in the calculation of $\p(C_i)$.

\underline{Case 1} $i \in I(f_i)$: If node $i$ is self-regulatory, its update depends only on $n_i-1$ other inputs. Moreover, if $A_i$ is true, $\mathbf x$ and $\mathbf y$ differ in at least one of the essential variables of $f_i$, i.e., $J(f_i)\geq 1$. Due to the same arguments as in Theorem \ref{thm_derrida1}, $|J(f_i)|$ follows a hypergeometric distribution, and we have
\begin{align*}
&\p(|J(f_i)| = c+1 \mid A_i) = H_{N-1,m-1,n_i-1}(c),\\
&\p(|J(f_i)| = c \mid \neg A_i) = H_{N-1,m,n_i-1}(c).
\end{align*}
For these reasons, 
\begin{align*}
\p(C_i) &= \sum_{c=0}^{m-1}\ \p(|J(f_i)| = c+1 \mid A_i)\Big(\p(C_i|A_i,B_i)\p(B_i|c+1)\p(A_i) + \p(C_i|A_i,\neg B_i)\p(\neg B_i|c+1)\p(A_i)\Big) \\
&\ \ + \sum_{c=0}^{m}\ \p(|J(f_i)| = c \mid \neg A_i) \Big(\p(C_i|\neg A_i,B_i)\p(B_i|c)\p(\neg A_i) + \p(C_i|\neg A_i,\neg B_i)\p(\neg B_i|c)\p(\neg A_i) \Big)\\
&= \sum_{c=0}^{m-1} \frac mN H_{N-1,m-1,n_i-1}(c) \left[\gamma_1s^{f_i}_{c+1} + \gamma_2\left(1-s^{f_i}_{c+1}\right)\right] + \sum_{c=0}^{m}\frac{N-m}{N} H_{N-1,m,n_i-1}(c) \left[\gamma_3s^{f_i}_c + \gamma_4\left(1-s^{f_i}_c\right)\right] .
\end{align*}

\underline{Case 2} $i \not\in I(f_i)$: If node $i$ does not regulate its own update, the $n_i$ inputs of $f_i$ are among the remaining $N-1$ nodes. Similar to Case 1, we have
\begin{align*}
&\p(|J(f_i)| = c \mid A_i) = H_{N-1,m-1,n_i}(c),\\
&\p(|J(f_i)| = c \mid \neg A_i) = H_{N-1,m,n_i}(c),
\end{align*}
and
\begin{align*}
\p(C_i) &= \sum_{c=0}^{m}\ \p(|J(f_i)| = c \mid A_i)\Big(\p(C_i|A_i,B_i)\p(B_i|c)\p(A_i) + \p(C_i|A_i,\neg B_i)\p(\neg B_i|c)\p(A_i)\Big) \\
&\ \ + \sum_{c=0}^{m}\ \p(|J(f_i)| = c \mid \neg A_i) \Big(\p(C_i|\neg A_i,B_i)\p(B_i|c)\p(\neg A_i) + \p(C_i|\neg A_i,\neg B_i)\p(\neg B_i|c)\p(\neg A_i) \Big)\\
&= \sum_{c=0}^{m}\frac mN H_{N-1,m-1,n_i}(c) \left[\gamma_1s^{f_i}_{c} + \gamma_2\left(1-s^{f_i}_{c}\right)\right] + \sum_{c=0}^{m}\frac{N-m}{N}H_{N-1,m,n_i}(c) \left[\gamma_3s^{f_i}_c + \gamma_4\left(1-s^{f_i}_c\right)\right] .
\end{align*}

Since $H_{N,m,n}(c) = 0$ if $c \not\in \{\max(m+n-N,0), \ldots, \min(m,n)\}$, we can use the same summation limits for all cases. Thus,

\begin{align*}
D(F,m) &= \sum_{i=1}^N \p(C_i)\\
&= \sum_{i=1}^N \sum_{c=0}^m\ 
\begin{cases}
\frac mN H_{N-1,m-1,n_i-1}(c) \left[\gamma_1s^{f_i}_{c+1} + \gamma_2\left(1-s^{f_i}_{c+1}\right)\right]  & \\
\quad + \frac{N-m}N H_{N-1,m,n_i-1}(c) \left[\gamma_3s^{f_i}_c + \gamma_4\left(1-s^{f_i}_c\right)\right] & \ \text{if}\ i \in |I(f_i)|\\
& \\
\frac mN H_{N-1,m-1,n_i}(c) \left[\gamma_1s^{f_i}_c + \gamma_2\left(1-s^{f_i}_c\right)\right]  &\\
\quad + \frac{N-m}N H_{N-1,m,n_i}(c) \left[\gamma_3s^{f_i}_c + \gamma_4\left(1-s^{f_i}_c\right)\right] & \ \text{if}\ i \not\in |I(f_i)|\\
\end{cases}
\end{align*}
}
\end{proof}

\end{document}